\newcommand{\supp}{\operatorname{supp}}
\renewcommand{\div}{\operatorname{div}}
\newcommand{\Rr}{{\mathbb{R}}}
\newcommand{\Zz}{{\mathbb{Z}}}
\newcommand{\Tt}{{\mathbb{T}}}
\newcommand{\Hh}{{\overline{H}}}
\newcommand{\cqd}{\hfill $\square$}
\newcommand{\Pf}{{\noindent \sc Proof. \ }}
\newcommand{\Rm}{{\noindent \sc Remark. \ }}
\newtheorem{teo}{Theorem}
\newtheorem{lem}{Lemma}
\newtheorem{pro}{Proposition}
\newtheorem{defi}{Definition}
\theoremstyle{definition}
\begin{document}
\title[W-measures and the semi-classical limit to A-M measure]
{  Wigner measures and  the  semi-classical limit to the
Aubry-Mather measure}

\author{Diogo A. Gomes (*), Artur O. Lopes (**), and Joana Mohr (***)}

\maketitle

\begin{abstract}

In this paper
we investigate the asymptotic behavior of the semi-classical limit of  Wigner measures defined on the tangent bundle of the one-dimensional torus.  In particular we show the convergence of Wigner measures to the  Mather measure on the tangent bundle, for energy levels above the minimum of the effective Hamiltonian.

The Wigner measures $\mu_h$  we consider are associated to $\psi_h,$ a distinguished critical solution  of the Evans' quantum
action given by $\psi_h=a_h\,e^{i\frac{u_h}h}$, with $a_h(x)=e^{\frac{v^*_h(x)-v_h(x)}{2h}}$, $u_h(x)=P\cdot x+\frac{v^*_h(x)+v_h(x)}{2},$ and
$v_h,v^*_h$ satisfying the equations
\begin{equation*}\begin{split}
- \, \frac{h\, \Delta v_h}{2}+ \frac{1}{2} \, | P + D v_h \,|^2 + V &= \overline{H}_h(P),\\
  \, \frac{h\, \Delta v_h^*}{2}+ \frac{1}{2} \, | P + D v_h^* \,|^2 + V &= \overline{H}_h(P),
\end{split}\end{equation*}
where the constant $\overline{H}_h(P)$ is the $h$ effective potential and $x$ is on the torus.
L.\ C.\ Evans considered limit measures $|\psi_h|^2$ in $\mathbb{T}^n$, when $h\to 0$, for any $n\geq 1$.

We consider the limit measures on the phase space $\mathbb{T}^n\times\mathbb{R}^n$, for $n=1$, and, in addition, we obtain rigorous asymptotic expansions for the functions $v_h$, and $v^*_h$, when $h\to 0$.
\end{abstract}

\thanks{
\begin{center}

(*) Partially supported by CAMGSD/IST through FCT Program POCTI -
FEDER and by grants PTDC/MAT/114397/2009,
UTAustin/MAT/0057/2008, PTDC/EEA-ACR/67020/2006, PTDC/MAT/69635/2006,
and PTDC/MAT/72840/2006, and by the bilateral agreement Brazil-Portugal (CAPES-FCT) 248/09.\\(**) Partially supported by CNPq, PRONEX -- Sistemas
Din\^amicos,  INCT, and beneficiary of CAPES financial support.\\ (***)
Partially supported by a CNPq postdoc scholarship.
\end{center}
}

\section{Introduction}

Consider the mechanical system defined by a Lagrangian of the form
$L(x,v)=\frac12 |v|^2 \, - V(x) - P v,$ with
$x$ in the $n$-dimensional torus $\mathbb{T}^n= \frac{\mathbb{R}^n}{\mathbb{Z}^n},$ $v\in\mathbb{R}^n,$
and fixed $P\in \mathbb{R}^n,$ and the associated Hamiltonian given by $H(p,x)=
\frac12 (P+ p)^2 \, + V(x).$ Here, $V$ stands for the
potential energy, which we assume to be symmetric in order to simplify
our arguments.

Before we present our results, let us review some basic facts of the Aubry-Mather theory.
Let $\mathcal M$ denote the set of probability measures on the Borel $\sigma$-algebra of
$\mathbb{T}^n\times\mathbb{R}^n.$

 The Mather problem  on $\mathbb{T}^n$ (see \cite{Mat},   \cite{CI} and    \cite{Fa}) consists in determining
the probability measures $\mu\in \mathcal M$ which minimize the action
\begin{equation} \label{Ma} \int_{\mathbb{T}^n\times\mathbb{R}^n}\,\bigg[\, \frac12 |v|^2 \, - V(x)\,\bigg]\, \,d\mu(x,v),\end{equation}
among the probabilities $\mu \in \mathcal M$ such that (the holonomic condition)
$$ \int v\, .\,D \phi\, d \mu =0,\,\,\, \forall \phi\in C^1 ( \mathbb{T}^n) ,$$
and,
$$ \int \, v \, d \mu=V,$$
for a fixed vector $V$.

For dimension $n>1,$ these minimizing measures on the tangent bundle, called Mather measures, do not need to be unique and are supported on sets which are not attractors for the flow. They are invariant by the correspondent  Euler-Lagrange flow.

It is known that the Mather measures on $\mathbb{T}^n\times\mathbb{R}^n$ are supported on graphs (the projection on $\mathbb{T}^n$ is injective). The probabilities we get when we project on $\mathbb{T}^n$ are called  projected Mather measures. They are probabilities on $\mathbb{T}^n$. In most of the cases the projected probability is not absolutely continuous with respect to the Lebesgue measure on the torus.

The Mather measure described above can be alternatively obtained as a minimization of the action of E-L invariant  measures without constrains considering a new Lagrangian of the form (see \cite{CI})
$$L(x,v)=\tfrac12 |v|^2 \, - V(x) - P v.$$

The result presented by L.\ C.\ Evans in \cite{Ev1} considers the projected Mather measure, but his methods do not extend to the analysis which we will consider here on the tangent bundle.

An example of Mather measure on the tangent bundle can be shown on figure 1. In this case consider the only E-L invariant measure with support on the top curve in figure 1. The periodic curve which is inside the open set determined by the two separatrices  does not carry a Mather measure.

In the above case the projected Mather measure can be computed by action angle variables (see
section 7.3 page 110 (7.24) in \cite{PR}, or, section 7.2 page 159 (7.22) in \cite {OA}).

Now consider the Hamiltonian $H$ associated to $L$, that is given by $$ H(p,x)=\sup_v[pv-L(x,v)],$$  and its associated Hamilton-Jacobi equation
$$H(\nabla\phi(x),x)=c,$$
where $c$ is a constant.
In general, we cannot find $C^1$ solutions $\phi$ of this equation and, therefore, we consider viscosity solutions, as follows.

We say that a function $\phi:U\to\mathbb{R}$ is a viscosity sub-solution in $U$ of the equation $H(\nabla\phi(x),x)=c$
if, given any $C^1 $ function $\eta:U\to\mathbb{R}$, and any $x_0\in U,$ we have
$$H(\nabla\eta(x_0),x_0)\leq c,$$
whenever $\phi-\eta$ attains a maximum value at $x_0.$
Dually, we say that a function $\phi:U\to\mathbb{R}$ is a viscosity super-solution in $U$ of the same Hamilton-Jacobi
equation if, given any $C^1 $ function $\eta:U\to\mathbb{R}$, and any $x_0\in U,$ we have
$H(\nabla\eta(x_0),x_0)\geq c,$ whenever $\phi-\eta$ attains a minimum value at $x_0.$
Finally, a function is a backward viscosity solution of the Hamilton-Jacobi equation if it is both a viscosity sub and super-solution (See \cite{Ev1}, \cite{Fa}, \cite{Ev2}, \cite{BG}, and \cite{A1}).

Now let us define a forward viscosity solution of $H(\nabla\phi(x),x)=c$, see \cite{Fa}. To do that we need to define
 $$ \check{H}(p,x)=\sup_v[-pv-L(x,v)].$$
 Let $\check{\phi}$ be a backward viscosity solution of $\check H(\nabla\check\phi(x),x)=c,$ then we say that $\phi^*=-\check\phi$ is a forward viscosity solution of $H(\nabla\phi(x),x)=c$.

 Note that  $\check{H}(p,x)= H(-p,x).$

L.C. Evans has shown that  for a Lagrangian of the form
  $L(x,v)=\frac{ |v|^2}2 \, - V(x) - P \,v  $,   the solutions $v_h$ and $v_h^*$ of \eqref{b1} and
\eqref{b2} converge (maybe in subsequence), respectively,  to backward and forward  viscosity solutions, $\phi$ and $\phi^*$, of $H(\nabla\phi(x),x)=\Hh (P)$, where $\Hh (P)$ is sometimes called the Ma\~{n}\'{e} critical value, see \cite{Ev1}.

If $\phi:U\to\mathbb{R}$ is a viscosity solution of $H(\nabla\phi(x),x)=\Hh (P)$ then the equality $H(\nabla\phi(x),x)=\Hh (P)$ holds,
in the classical sense, at every point $x$ of differentiability of $\phi.$ (See Corollary 4.4.13 in \cite{Fa}.)

%We are interested in the case $U=\mathbb{T}^n$.
In the case the Mather measure is unique, the viscosity solution is unique, up to an additive constant
(See Sections 7 and 8 in \cite{Fa}, Section 4.9 \cite{CI}, \cite{Ev1}, \cite{BG},
and \cite{Gom3}), and the solution $v_h$ of \eqref{b1} does converge to the backward viscosity solution of $H(\nabla\phi(x),x)=\Hh (P)$,
as $h\to 0.$

There exist a value $P_{crit}$ such that for $P>P_{crit}$ there exists viscosity solutions (see \cite{Fa}) on the corresponding level of energy $\Hh(P)$. For the case of a potential with just a maximum in the one-dimensional  torus the constant energy lines
are described in figure 1. The periodic curve in the top corresponds to $P$ above the critical value $P_{crit}$. The periodic curve which is inside the open set determined by the two separatrices  not.

We will consider here only the one-dimensional (that is, $n=1$) case for a certain general class of potentials for which the Mather measure is unique, and the projected Mather measure is absolutely continuous with respect to the invariant measure. In this one-dimensional setting the Mather measure is the unique  measure on a certain energy level $\Hh(P)$ (depending on $P$) which is invariant by the Euler-Lagrange flow.

For level of energy $\Hh(P)$ which projects on the all one-dimensional  torus one can consider the
function $p^{+} (x)$ which graph determines the curve on the tangent bundle.  The integral
$\int_{-1/2}^{1/2} p^{+} (x) dx =P$, relates the energy level $\Hh(P)$ to $P$. The function $\Hh(P)$ is differentiable on $P$ \cite{EG1}.

We point out that our result does cover a large class of interesting cases, such as the one-dimensional Lagrangians
$L(x,v)=\frac12 |v|^2 \, - V(x) + w_x(v),$ with $w$ a nontrivial closed form, for which the projected Mather measure is absolutely continuous.

Evans introduced a quantum minimization problem for his quantum action which is similar to the classical
minimization problem of Mather's theory. He considered the semi-classical problem of limit measures on the configuration space $\mathbb{T}^n,$ showing that in the semi-classical limit with $h \to 0,$  when the probability associated to the wave $\psi_{h}$ converges, then the limit measure is a projected Mather measure \cite{Ev1}.

To describe our main result we need to recall some facts from \cite{Ev1}.
We analise the Wigner measure associated to the critical function
$$\psi_h(x) = a(x)\,e^{i \frac{P x+ \tilde{v}(x)}{h}}$$
of the Evans's quantum action
\[
S[\psi]=\int_{\Tt^n} \tfrac12 h^2 |D a|^2 +
\left[\tfrac12 |P+D_x\tilde{v}|^2-V(x)\right] a^2 dx,
\]
under the constraints
\[
\int_{\Tt^n} a^2 dx=1,
\]
stationary current
\[
\div(a^2 (P+D_x\tilde{v}))=0,
\]
and total current intensity
\[
\int_{\Tt^n} a^2 (P+D_x\tilde{v})dx= Q.
\]

The function $a$ above take only real values.

The analysis of the properties of the critical solutions described  above  is the so called Evan's quantum action minimization problem.

The Evans' critical solution (see \cite{Ev1}) is given by the periodic function $\psi_h=a_h\,e^{i\frac{u_h}h}$, where
$a_h(x)=e^{\frac{v^*_h(x)-v_h(x)}{2h}}$, and $u_h(x)=P\cdot x+\frac{v^*_h(x)+v_h(x)}{2}.$ Here,
$v_h,v^*_h$ satisfy the equations
\begin{equation}\label{b1}
 - \, \frac{h\, \Delta v_h}{2}+ \frac{1}{2} \, | P + D v_h \,|^2 + V= \overline{H}_h(P),
\end{equation}
\begin{equation}\label{b2}
  \, \frac{h\, \Delta v_h^*}{2}+ \frac{1}{2} \, | P + D v_h^* \,|^2 + V= \overline{H}_h(P),
\end{equation}
where the constant $\overline{H}_h(P)$ is called the effective Hamiltonian.

We also assume that $v_h^*-v_h$ is normalized, that is,
\begin{equation}\label{norm}\int_{\Tt^n} e^{\frac{v_h^*(x)-v_h(x)}h  }dx=1,\end{equation}
and, therefore,
$$\int_{\Tt^n} |\psi_h (x)|^2 dx=1.$$

   In the present case we consider   the limit function $v_0$ which is a viscosity solution (also almost everywhere differentiable) and satisfies div $(( P + v_{0}') \, \sigma_0) =0$,
where $\sigma_0$ is the projected Mather measure  (see \cite{Ev1} section 3, \cite{BG} or \cite{EG1}). We will comment more about this fact in the last section.

The function $\psi_h$ is not periodic.

N. Anantharaman \cite{A1}, \cite{A3}  and L.C. Evans \cite{Ev1}
had previously (and in arbitrary dimension) studied the convergence on the configuration space of the probability
measures $|\psi_h (x)|^2(x)\, dx$.

One of the main motivations in the study of the critical solution given by $\psi_h=a_h\,e^{i\frac{u_h}h}$, with
$a_h(x)=e^{\frac{v^*_h(x)-v_h(x)}{2h}}$, $u_h(x)=P\cdot x+\frac{v^*_h(x)+v_h(x)}{2},$ as
%$\psi_h=a_h\,e^{i\frac{u_h}h}$, with $a_h(x)=e^{\frac{v^*_h(x)-v_h(x)}{2h}}$, $u_h(x)=P\cdot x+\frac{v^*_h(x)+v_h(x)}{2}$,
$h \to 0,$ is its relation to the viscosity solutions of the Hamilton-Jacobi equation which was described before.

C.  Evans \cite{Ev1} considered probabilities "on the $n$-dimensional torus".
We are interested here in the semi-classical limit of the Wigner measure (to be defined soon) associated to $\psi_h$, when $h \to 0$, and its relation to the Aubry-Mather measure "on the tangent bundle" of the one-dimensional torus. (See (\cite{Mat}, \cite{CI}, \cite{Fa}, and \cite{BG}.) This means that we consider the problem on the phase space $\mathbb{T}^n\times\mathbb{R}^n.$

We point out that the quantum minimal action problem is closely related to the Bohm's viewpoint of Quantum Mechanics, which is connected to the classical Hamilton-Jacobi equation with the addition of an extra diffusion term. (See Section 19 in \cite{Jo}, \cite{Re} and \cite{Vo}.) Observe also that we are considering an action which is quite different from the action used by F.\ Guerra and L.\ Morato in \cite{GM}.

Before we proceed, we discuss some background material on the semi-classical analysis and Wigner measures on the torus
$\Tt^n=\mathbb{R}^n / \mathbb{Z}^n ,$ which we identify with $[-\frac{1}2,\frac{1}2)^n.$

Given a state $\psi\in L^2(\Tt^n)$, one would like to compute the averages of observables, such as momentum or energy.  Here, observables are linear operators $b$ and their averages are given by $\langle \psi, b\psi\rangle.$

Frequently, quantum observables are pseudo-differential operators associated to a smooth symbol $a(x, p).$
In the periodic setting, the momenta $p$ are quantized and, instead of taking values in $\Rr^n,$
they belong to the lattice $h \Zz^ n+ \frac{P}{2\pi} .$
%We assume that the state is in $L^2(\Tt^n)$ and the symbol $a(x,p)$ is smooth.
%The observables are identified with symbols of pseudo-differential operators.
In the pseudo-differential calculus, there is not a unique way to associate an operator to a given symbol.
A common choice is to associate to a symbol $a$ the Weyl operator $a^W$ given by the Weyl quantization rule
\[
\langle \psi, a^W \psi\rangle
=
%\frac 1 {h^n}
\sum_{p\in h \Zz^n+ \frac{P}{2\pi}}
\int_{\Tt^n}
\int_{\Tt^n}
a(x, 2\pi p) \bar \psi(x+\tfrac12 y)\psi(x-\tfrac12 y) e^{-\frac{2 \pi i p y}h} dy dx,
\]
which has the advantage of associating self-adjoint operators to real symbols.
(See \cite{dGo}, and \cite{A3} for related results.)
Notice the multiplicative factor $2\,\pi$ of the momentum variable, due to the product coordinates of the torus.
In our one-dimensional setting,
\[
\langle \psi, a^W \psi\rangle =
%\frac 1 {h^n}
\sum_{p\in h \Zz+ \frac{P}{2\pi}} \int_{-\frac{1}2}^{\frac{1}2} \,\bigg[\int_{-\frac{1}2}^{\frac{1}2} a(x, 2\pi p) \bar
\psi(x+\tfrac12 y)\psi(x-\tfrac12 y) e^{-\frac{2 \pi i p y}h} dy\,\bigg] dx.
\]

\begin{figure}\label{Fig1.1}
\begin{center}
\epsfig{file=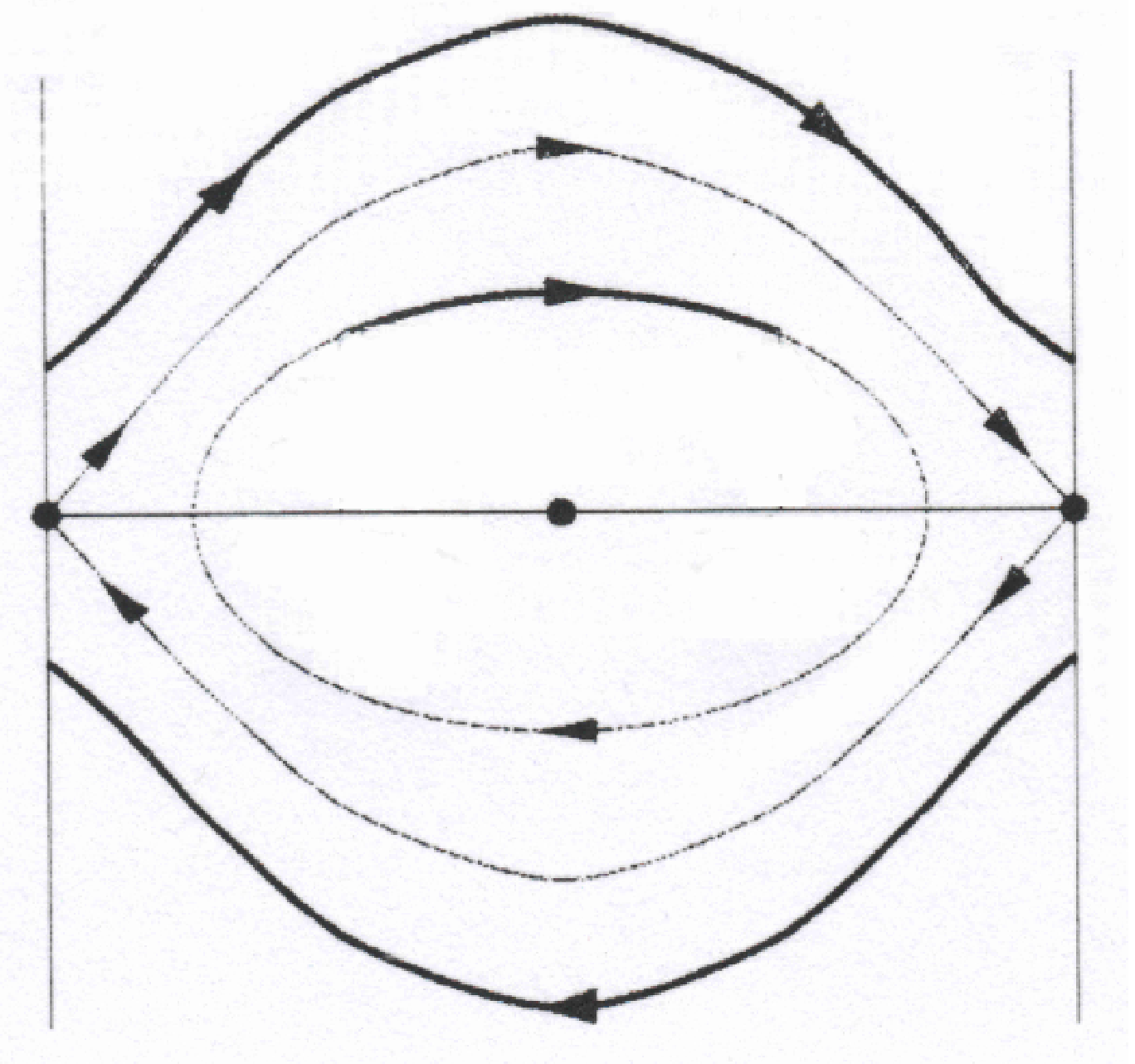, width=8cm}
%\caption{ }
\end{center}
\end{figure}

If $a$ is simply a function of $x,$ then
\[
\langle \psi, a^W \psi\rangle=
\int_{\Tt^n} a(x) |\psi|^2 dx.
\]
and if $a$ depends only on $p,$ the average value is
\[
\langle \psi, a^W \psi\rangle=
\sum_{p\in {h\Zz^n}+ \frac{P}{2\pi}}
a(2\pi p) |\hat \psi(p)|^2,
\]
where
\[
\hat \psi(p)=\int_{\Tt^n} \psi(x) e^{-\frac{2\pi i p \cdot x}h}dx
\]
are the Fourier coefficients.

Given any $\psi$ such that $\int |\psi|^2 dx =1,$ we define  $W_h^\psi$ on $\Tt^n\times (h\,\Zz^n + \frac{P}{2\pi})$ by
\[
W_h^\psi (x,p)=
%\frac 1 {h^n}
\int_{\Tt^n}
\bar \psi(x+\tfrac12 y)\psi(x-\tfrac12 y) e^{-\frac{2 \pi i p y}h} dy.
\]
It follows that
\[
\langle \psi, a^W \psi\rangle
=
\sum_{p\in h \Zz^n+ \frac{P}{2\pi}}
\int_{\Tt^n}
a(x, 2\pi p) W_h^\psi(x,p)dx.
\]

The values of $p$ where chosen in the set $(h\,\Zz^n + \frac{P}{2\pi})$ because $\psi_h$ is not periodic

Note that   $< \psi, a^W \psi>$ gives real values when  applied to smooth real functions with compact support (See \cite{Zh}).
The value corresponding to a  nonnegative real function $a$ it is not necessarily nonnegative.

It is well known that if $\int_{\Tt^n} |\psi|^2 dx=1$ for some $\psi\in L^2(\Tt^n),$ then
\[
\sum_{p\in h \Zz^n+ \frac{P}{2\pi}}
\int_{\Tt^n}W_h^\psi (x,p)\,dx=1.
\]

For fixed $\psi$ it is known that limit points, when $h \to 0$, of Wigner distributions are positive measures \cite{Zh}.

We denote by   $\mu_h^{\psi_h}$ the Wigner distribution associated to $\psi_h$ on $\Tt^n\times\mathbb{R}^n$ given by
\begin{equation}\label{wigner} \sum_{p\in h \Zz^ n+ \frac{P}{2\pi}}
\int_{\Tt^n}
a(x, 2\pi p) W_h^\psi(x,p)dx        =\int a(x,p) \,d  \mu_h^{\psi_h}.
\end{equation}

%  Here we will
%consider the critical solutions $\psi_h$ of the Evans action  and we
%will show the convergence of the corresponding Wigner measures  to
%the Mather measure (to be defined bellow). Our results here are  explicit and we will give
%the precise asymptotic of this convergence. This question was not
%considered in the previous known results.

We want to consider for each value $h$ the Wigner distribution $\mu_h =\mu_h^{\psi_h}$ associated to the Evans' critical $\psi_h$. Note that the normalization we considered before for $v_h^*-v_h$ does not create indeterminacy for $W_h^{\psi_h}$.

A very complete  description of what is known when $h\to 0$, in problems related
to the projected Aubry-Mather measures is \cite{A1}, \cite{A3}.

In \cite{AIPS} the case where do exist several different Mather measures is analyzed and
the question of selection of the limit projected Mather measure is considered.

In \cite{Ev1} the problem is  described in a different setting and in terms of the
quantum action. This result is for the torus of dimension $n$.

In the general case the Mather measures are singular with respect to Lebesgue measure.

On the geodesic case for negative curvature manifolds these measures are supported on geodesic laminations (appendix \cite{A2}) and a Large Deviation Principle is known under the hypothesis of uniqueness of the maximizing probability. For nonnegative curvature the Large Deviation problem around a closed geodesic with only points of curvature zero is analyzed in \cite{LR}.

Our results are all in the one-dimensional setting, and our main result is the following.

\begin{teo}\label{teo1}
Let $L:\mathbb{T}\times\mathbb{R}\to\mathbb{R}$ be a Lagrangian of the form $$L(x,v)=\tfrac12 |v|^2 \, - V(x) - P v,$$
with fixed $P> P_{crit},$ and a smooth potential energy $V$ symmetric with respect to the origin 0 that has a unique
non-degenerate minimum point at the origin.

 If $\mu_{h}$ denotes the Wigner distribution associated to the Evans' critical $\psi_h$, then $\mu_h$ converges to the Mather measure on the tangent bundle. In this case, the Mather measure is the unique invariant measure for the Euler-Lagrange flow on a certain energy level (which depends on $P$).

Moreover, consider a smooth function $f$ supported on a compact set $A\subset \mathbb{T}\times\mathbb{R}.$
\begin{enumerate}
  \item If the support of $f$  does intersect the energy level $\overline H(P),$ then
$$\int f(x,p)\,d\mu_{h}(x,p)\to 2\pi \sum^{2}_{i=1} \int\frac{f(x_i( p),2\pi{p})\;p^+(\bar x)} {|(p^{+})'(x_i( p))|\;p^+(x_i( p))}dp, $$
for certain functions $p^+$ and $x_i(p)$ with $p^+(x_i( p))=2\pi p.$

The value $ \bar x$ appears in a natural way and is responsible for the normalization which assures us that we get  a probability in the limit.
The right hand side is just the canonical expression of the invariant density for the Hamiltonian flow via action-angle variables.

  \item If the support of $f$ does not intersect the energy level $\overline H(P),$ then
$$\int f(x,p)\,d\mu_{h}(x,p)\to 0.$$
\end{enumerate}

In both cases case we are able to estimate the speed of convergence.

Our measure convergence claim is in  the following sense: $\mu_h$ converges to $\mu$ if $\int fd\mu_h\to \int f d\mu,$ for
each smooth functions $f$ with the property that its support does not intersect two certain lines $p=p^P_{min}$ and $p=p^P_{max}$ to be defined in the following section.

\end{teo}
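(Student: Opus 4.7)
The plan is to analyse $\mu_h$ through a WKB stationary-phase expansion of the Wigner integral, building on the rigorous asymptotic expansions of $v_h$ and $v_h^*$ announced in the abstract and (presumably) established earlier in the paper. First I would record that, under the hypotheses ($P>P_{crit}$, unique Mather measure), the leading viscosity solutions of \eqref{b1}--\eqref{b2} coincide, so that one can write $v_h = \phi + h\phi_1 + o(h)$ and $v_h^* = \phi + h\phi_1^* + o(h)$. Consequently the Bohmian amplitude $a_h^2 = e^{(v_h^*-v_h)/h}$ converges uniformly to the projected Mather density $\sigma_0 = e^{\phi_1^* - \phi_1}$, while the Bohmian phase $u_h = Px + (v_h+v_h^*)/2$ converges in $C^1$ to $u_0 = Px + \phi$, and the Hamilton--Jacobi equation forces $u_0'(x) = P + \phi'(x) = p^+(x)$.

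Next I substitute $\psi_h = a_h e^{iu_h/h}$ into the definition of $W_h^{\psi_h}$, obtaining
\begin{equation*}
W_h^{\psi_h}(x,p) \;=\; \int_{-1/2}^{1/2} A_h(x,y)\, e^{i\Phi_h(x,y,p)/h}\, dy,
\end{equation*}
where the amplitude $A_h(x,y) = a_h(x+y/2)a_h(x-y/2)$ is even in $y$ and the phase $\Phi_h(x,y,p) = u_h(x-y/2)-u_h(x+y/2)-2\pi p\, y$ is odd. Taylor expansion at $y=0$ yields $\Phi_h = -y(u_h'(x) + 2\pi p) - (y^3/24)u_h'''(x) + O(y^5)$ and $A_h(x,y) = a_h^2(x)\bigl[1 + (y^2/8)(\log a_h^2)''(x) + O(y^4)\bigr]$, with both remainders uniformly bounded in $h$ by Step~1 since $(\log a_h^2) = (v_h^*-v_h)/h$ is $O(1)$.

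I then apply oscillatory-integral analysis in $y$. Off the locus $\{u_h'(x) + 2\pi p = 0\}$ the linear leading phase allows repeated integration by parts and yields $|W_h(x,p)| = O(h^N)$ uniformly on compact sets disjoint from the energy curve, which handles case~(2). On the locus, oddness of $\Phi_h$ kills the quadratic remainder, and the integral concentrates around $y=0$ with $p$-width $O(\sqrt h)$ and leading magnitude $a_h^2(x)$. Pairing with a smooth symbol $f(x, 2\pi p)$ and summing over the lattice $p \in h\Zz + P/(2\pi)$ of spacing $h$, a Riemann-sum argument converts the sum into $h^{-1}\int dp$, so the net contribution localises on the curve $2\pi p = p^+(x)$ with density $\sigma_0(x)$. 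A change of variables $x \leftrightarrow p$ along this curve---non-monotone, splitting into two branches between the extrema $p_{\min}^P, p_{\max}^P$ of $p^+$---produces the two pre-images $x_1(p), x_2(p)$ and the Jacobian $|(p^+)'(x_i(p))|^{-1}$. Since $\sigma_0$ is the Euler--Lagrange-invariant density on the periodic orbit, $\sigma_0(x) = p^+(\bar x)/p^+(x)$ with $\bar x$ determined by $\int_{\Tt}\sigma_0 = 1$, and assembling these factors gives the formula of case~(1). The speed of convergence arises from combining the $O(\sqrt h)$ stationary-phase remainder with the $O(h)$ Riemann-sum error for smooth $f$.

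The main obstacle is the stationary-phase step: producing remainder estimates that are uniform in the lattice variable $p$ and in the $h$-dependence of both amplitude and phase (carrying the subleading $\phi_1, \phi_1^*$), and explaining why $f$ must be supported away from $p = p_{\min}^P$ and $p = p_{\max}^P$. At those exceptional values $(p^+)'$ vanishes and the $x \leftrightarrow p$ change of variables degenerates, so the clean WKB asymptotics we use give way to Airy-type behaviour that lies outside the limit stated in the theorem; the technical heart of the proof is therefore a quantitative control of the oscillatory integral uniformly away from these critical lines.
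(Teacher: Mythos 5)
Your overall strategy --- plug the WKB form $\psi_h = a_h e^{iu_h/h}$ into $W_h^{\psi_h}$, use the asymptotic expansions of $v_h, v_h^*$ to control the amplitude and phase, apply oscillatory-integral methods, and finish with a Riemann-sum-over-the-lattice argument --- is the same as the paper's. But there are two gaps in the way you organize the oscillatory-integral step, both stemming from the fact that you analyze $W_h(x,p)$ by one-dimensional stationary phase in $y$ for fixed $(x,p)$, whereas the paper performs a genuinely two-dimensional stationary-phase analysis in $(x,y)$ for each fixed lattice $\hat p$.

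First, your 1D-in-$y$ stationary point at $y=0$ is \emph{degenerate}: with $S_{\hat p}(x,y)=Py+v_0(x+\tfrac y2)-v_0(x-\tfrac y2)-2\pi\hat p y$ one has $\partial_{yy}S=\tfrac14[v_0''(x+\tfrac y2)-v_0''(x-\tfrac y2)]$, which vanishes identically at $y=0$. You acknowledge this ("oddness of $\Phi_h$ kills the quadratic remainder"), but then assert a clean $O(\sqrt h)$ concentration; in fact, with a cubic leading phase the 1D analysis yields Airy-type, not Gaussian, asymptotics, and the stated $\sqrt h$ width and $O(\sqrt h)$ remainder do not follow. The paper sidesteps this entirely: at the critical point $(x_i(\hat p),0)$ the full $2\times2$ Hessian is $\begin{pmatrix}0 & v_0''(x_i)\\ v_0''(x_i)&0\end{pmatrix}$, which is nondegenerate precisely because $v_0''(x_i)\neq0$ for $2\pi\hat p\neq p^P_{\max}$, and the joint stationary-phase expansion gives the $\frac{2\pi h}{|v_0''(x_i(\hat p))|}$ factor directly.

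Second, your assertion that off the locus $\{u_h'(x)+2\pi p=0\}$ repeated integration by parts in $y$ gives $|W_h(x,p)|=O(h^\infty)$ is not correct as stated: nonstationarity of the phase at $y=0$ does not imply nonstationarity for all $y$. The full set of critical points of $S_{\hat p}$ in $(x,y)\in\Tt^2$ includes, besides $(x_i(\hat p),0)$, the points $(0,2x_i(\hat p))$ with $y\neq0$, and these carry nonvanishing amplitude. The paper has to treat them separately: it introduces the $\eta^\epsilon$ cutoff in $y$, shows $\sqrt{|\det D^2S(0,2x_i)|}=|v_0''(2x_i)|\sim\sqrt{p^P_{\max}-2\pi p}$, estimates the total contribution of these points as $O(\epsilon)$, and handles a residual non-oscillatory piece by a discrete Abel-summation argument in $\hat p$ (Lemma~\ref{lema1}) that is unrelated to stationary phase. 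None of this appears in your outline, and without controlling these extra critical points your bound on the contribution away from the energy curve is not justified. The upshot is that the 1D-in-$y$ decomposition, while natural, does not localize the analysis cleanly; you would need either to carry out the Airy analysis and account for the extra $y$-critical points explicitly, or switch to the joint $(x,y)$ stationary phase the paper uses.
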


The symmetry hypothesis on $V$ simplifies our computations, but our method can be adapted to more
general situations (taking up much more pages). For instance, just below \eqref{a2} this symmetry hypothesis results in the same property for the viscosity solution, and this makes the analysis of the problem much more simple.

In the proof of our theorem we obtain an estimate of the velocity of the convergence of
$\int f(x,p)\, d\mu_h (x,p)$ for a  certain class of test function $f(x,p)$ with compact support, where $\mu_h$ is the Wigner distribution associated to the family $\psi_h.$ In addition, we also establish the asymptotic behavior of the solutions of \eqref{b1} and
\eqref{b2} when $h\to 0.$ This requires a rigorous asymptotic expansion analysis which, to the best of our knowledge, cannot
be found in the existing literature.

It is true that in this one-dimensional case we could avoid the mention of Mather measures, and consider only invariant measures on constant energy levels but since we are going to use the setting and the results of \cite{Ev1}, which are presented in the context of the Aubry-Mather theory, we believe that our discussion so far could be helpful.

Note that if one knew in advance that the limit
of the Wigner distribution is an invariant measure for the hamiltonian flow, the result would be trivial,
because we are in a situation where there is a unique invariant measure on each energy
level. However,  $h $ is only a $O(h)$-quasimode of the quantum hamiltonian (this was proved
by C. Evans \cite{Ev1}) and we do not know a priori that the limits of the Wigner measures are invariant.

The proof relies on the
facts that the phase space picture is completely explicit, that the Aubry-Mather measure
is unique, and that the solution to the Hamilton-Jacobi equation is unique, smooth, and
explicit; thus, this approach is specific to dimension 1.

The semiclassical limit of Wigner measures that minimize the quantum
action was considered in \cite{GV}. This  was inspired by the
previous work \cite{Ev1} which only uses classical tools. We now
revisit this problem and compute the rates of convergence (above the
critical level of energy) for one-dimensional Hamiltonians.
We will consider  $P> P_{crit}$, where $P_{crit}= \inf \{ P: \overline{H}(P)> \min \overline{H}\}>0.$

Concerning possible extensions of our results to higher dimensions, we acknowledge that the control of the stationary phase method in the first part of our work, although being much more difficult, could possibly be obtained. Anyway, in the last part of our work (see Section 3) we have to control an asymptotic expansion, for which we need a Mather measure that projects on the entire torus. We strongly believe that this control is not obtainable with our methods without that assumption. An extension in this direction will require a completely different approach.

%In the other case the information you get from the limit measure  is spread in a small and intricate set on the torus.

This paper is organized as follows. In the next section
we  derive the correct asymptotic behavior of the integrals over
Wigner measures when $h\to 0$. In the last section we describe the
asymptotic expansion of the function $v_h$ in terms of $h,$
which we will need for the proof of Theorem \ref{teo1}.

\section{ The asymptotic limit of critical Wigner measures}

The  critical Wigner measure is the Wigner measure associated to a critical solution $\psi_h$. For simplicity we will omit the word critical from now on.
We want to investigate the asymptotic limit of Wigner measures, when $h\to 0$.

Remember that $W_h^{\psi_h}(x,p)=\int_{\Tt} \bar \psi_h(x+\frac{y}2) \psi_h(x-\frac{y}2) e^{-\frac{2\pi i p y}{h}}dy$, hence for any continuous $f$ with support on a compact set  $A\subset \Tt \times \mathbb{R}$
$$\int f(x,p)\, d\mu_h (x,p)\,=\,\sum_{\hat{p}\in h \Zz+ \frac{P}{2\pi}}\int_{\Tt}\int_{\Tt}f(x,2 \pi\hat{p}) e^{\frac{v^*_h(x+\frac{y}{2})-v_h(x+\frac{y}{2})+v^*_h(x-\frac{y}{2})-v_h(x-\frac{y}{2})}{2h}} \cdot
 $$
$$\cdot e^{i[P\cdot \frac{y}{h}+\frac{v^*_h(x+\frac{y}{2})+v_h(x+\frac{y}{2})  -v^*_h(x-\frac{y}2)-v_h(x-\frac{y}{2})}{2 h}  ]}e^{-\frac{2\pi i \hat{p} y}{h}} \;dx dy. $$

Note that, as $\hat{p}\in\{ h m+ \frac{P}{2\pi}$; $m\in \Zz$\}, we can write

$$\int f(x,p)\, d\mu_h (x,p)\,=\,\sum_{\hat{p}\in h \Zz+ \frac{P}{2\pi}}\int_{\Tt}\int_{\Tt}f(x,2 \pi\hat{p}) e^{\frac{v^*_h(x+\frac{y}{2})-v_h(x+\frac{y}{2})+v^*_h(x-\frac{y}{2})-v_h(x-\frac{y}{2})}{2h}} \cdot
 $$
$$\cdot e^{i[\frac{v^*_h(x+\frac{y}{2})+v_h(x+\frac{y}{2})  -v^*_h(x-\frac{y}2)-v_h(x-\frac{y}{2})}{2 h}  ]}e^{-2\pi i m y} \;dx dy. $$

Note that the integrand is periodic on the variable $y$

We point out that  the  analysis of the case of integrals of functions depending only on $x$, namely,
$f(x,p)=f(x)$, is well  known. It follows from the results in \cite{A1}
and \cite{Ev1}. In particular, the projection of the above Wigner
measures converges to the projected Mather measure (on the torus) \cite{Ev1}
and large deviations are also well understood (in the case the
Mather probability is unique) according to  Proposition 3.11 \cite{A1}.

 In the one-dimensional case
(above the $x$-axis)   the level
of energy consist exactly of one periodic trajectory when $P>P_{crit}$. Indeed, note that if the energy is high the modulus of the velocities on this level of energy is high, and, therefore,  by conservation of energy the level of energy has to be a curve like the one  on the top of figure 1.
So, it follows from the graph property \cite{CI}
\cite{Fa} \cite{BG} \cite{Gom3} that  the Mather  probability has
support in this periodic trajectory.

The case of $P\in (-P_{crit}, P_{crit})$ (where $P$ attains the minimum value of  $\overline{H}$) requires a different analysis and will not be
considered here.

We will show later that $W_h^{\psi_h}$
converges to the Mather measure which has support in such
trajectory.

For $P> P_{crit}$, let us find the backward viscosity solution of  $H( \phi' (x) ,x)= \overline H(P)$. If we denote by $p^+_P (x)=\sqrt{2(\overline H(P) -V(x))}>0$
then
$$\phi_P(x)=\phi(x) =  \,\int_{-\frac{1}2}^x \, p^+_P (s)ds - \,P\, \big(x+\frac{1}2\big)$$ is a solution, and hence a viscosity solution, of $H( \phi' (x) ,x)= \overline H(P)$.
It is easy to see that in this case the forward viscosity solution $\phi^*$ is such that  $\phi^*=\phi$.

The function $p^{+}_P$ is such that its graph determines a level of energy $\overline H(P)$(as for example the curve shown  on the top of figure 1).

Note that  $p^+_P (x)=P+\phi_P'(x) $. Then, we have  $\int_{-\frac{1}2}^{\frac{1}2} p^+_P (s)\, ds =P$.

 Let us denote by $p^P_{min}$ and $p^P_{max}$ the numbers such that $p^P_{min}\leq p^+_P (x)\leq  p^P_{max}$ for all $x\in \Tt$, note that $p^P_{min}>0$.

 The Mather measure with
rotation number $Q =  D_P \overline H(P)$ is defined by the unique periodic
trajectory supported in the graph $(x, p^+_P (x)).$

\begin{defi} Let $f$ and  $g$   be   functions of the variable $h>0$. We say that $f(h)=O(g(h))$ as $h\to 0$ if there exist a constant $K>0$ and $\delta>0$ such that $|f(h)|\leq K|g(h)|$ whenever $0<h<\delta$.

\end{defi}

\begin{defi} Let $f(h)$ and  $\psi(h)$   be two functions satisfying

 $\displaystyle \lim_{h\to 0}\frac{f(h)}{\psi(h)}=1$, then we say that $f$ is asymptotic to $\psi$, or $\psi$ is an asymptotic approximation to $f$, and, we write $f(h)\sim \psi(h)$, as $h\to 0$.

\end{defi}

In the last section we will study the asymptotic behavior of the functions $v_h$ and $v_h^*$, the solutions of \eqref{b1} and \eqref{b2}.

We prove that for any integer $\gamma\geq 2$ there are functions
functions  $v_i,v_i^*$, $i=0,1$, and $w_\gamma, w_\gamma^*$ such that the following  functions
\begin{equation}\label{eq3}\hat{v}^\gamma_h:= v_0  + h\, v_1 +  h^2 w_\gamma\,\,\,,\,\,\,
\hat{v}^{\gamma*}_h:= v^*_0  + h\, v^*_1 +  h^2 w_\gamma^*,\end{equation}
 approximate uniformly the functions $v_h, v_h^*$, to a order depending on $\gamma$.
More precisely, we can choose $w_\gamma$ and $w_{\gamma}$ given by a finite sum of the form
\[
w_\gamma=\sum_{j=2}^\gamma h^{j-2} v_j, \quad w_\gamma^*=\sum_{j=2}^\gamma h^{j-2} v_j^*,
\]
where all functions $v_j, v_j^*$ are smooth.
Then if we define $g_h=:v_h-\hat v_h^\gamma$ and $ g^*_h=:v^*_h-\hat v_h^{\gamma*}$, by proposition \ref{estimC0} of the last section, we have that
\begin{equation}\label{eq1}| g_h(x)-g_h(x_h)|=O(h^{\frac{\gamma}2}), \end{equation} and
\begin{equation}\label{eq2}| g^*_h(x)-g^*_h(x_h)|=
O(h^{\frac{\gamma}2}) .\end{equation}
Note that the error term is only $O(h^{\frac{\gamma}2})$ in the uniform norm, and we do not make any claim on boundedness of its derivatives.

In addition,  we will   also show that
\begin{equation}\label{eq4}\phi_P=\phi=v_0=v_0^* \,\,\,,\,\,\, v_1=-v_1^*.\end{equation}

\Rm By the normalization hypothesis \eqref{norm} for  $v_h$ and $v_h^*$,  for each $h$, there exists $x_h\in\Tt$ such that
$e^{\frac{v_h^*(x_h)-v_h(x_h)}h}=1$, then ${v_h^*(x_h)-v_h(x_h)}=0$. This implies that

\begin{equation}\label{normalization} g^*_h(x_h)-g_h(x_h)=\hat v^\gamma_h(x_h)-\hat v^{\gamma*}_h(x_h).  \end{equation}

Now we will use the notion of asymptotic approximation to simplify the expression of the Wigner measure.

   We define
$$ \tilde F_{P,\hat p}(h)=\tilde F(h):=\int_{\Tt} \int_{\Tt}f(x,2\pi \hat p)\, e^{\frac{v^*_h(x+\frac{y}2)-v_h(x+\frac{y}2)+
v^*_h(x-\frac{y}2)-v_h(x-\frac{y}2)}{2h}}\cdot
 $$
 $$\cdot e^{i[P\cdot \frac{y}{h}+\frac{v^*_h(x+\frac{y}2)+v_h(x+\frac{y}2)  -
v^*_h(x-\frac{y}2)-v_h(x-\frac{y}2)}{2 h} -\frac{2\pi
\hat{p} y}{h} ]} \;dx dy,$$
and
$$F_{P,\hat p}(h)=F(h):=\int _{\Tt} \int _{\Tt}f(x,2\pi \hat p)\,e^{[-v_1(x+\frac{y}2)-v_1(x-\frac{y}2)+2v_1(\bar x)]} e^{\frac{i}{h}S_{\hat p}(x,y)      }dx dy,$$
where $S_{\hat p}(x,y):=P\cdot y+v_0(x+\frac{y}2) - v_0(x-\frac{y}2)  -2\pi\hat{p} y,\,$ and $\displaystyle\,\bar x:=\lim_{h\to 0} x_h$.

We point out that the value $ \bar x$ appears in a natural way from the normalization hypothesis described by $x_h$ (see Remark above) and  assures us that we get  a probability in the limit. It has a multiplicative effect on the limit measure and therefore its influence does not depend on the subsequence we choose (see expression (1) in Theorem 1).

In the following proposition we use the results obtain in the last section, that is, the asymptotic expansion of $v_h$ and $v_h^*$.

\begin{pro}
Fix $\gamma \geq 3$. Then there exists a smooth function $\eta$
\begin{align*}
F(h)=&\tilde F(h)+h\int _{\Tt} \int _{\Tt} \eta(x,2\pi p, h)\,e^{[-v_1(x+\frac{y}2)-v_1(x-\frac{y}2)+2v_1(\bar x)]} e^{\frac{i}{h}S_{\hat p}(x,y)      }dx dy\\
&+O(h^{(\gamma-2)/2}).
\end{align*}
Furthermore, the function $\eta$ is uniformly bounded for all $h$, together with all its derivatives.
\end{pro}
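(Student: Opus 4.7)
The plan is to substitute the asymptotic expansions \eqref{eq3}--\eqref{eq4} for $v_h$ and $v_h^*$ directly into the exponent of the integrand defining $\tilde F(h)$, and to organize the result by powers of $h$. Writing $v_h = v_0 + h v_1 + h^2 w_\gamma + g_h$ and $v_h^* = v_0 - h v_1 + h^2 w_\gamma^* + g_h^*$ (using $v_0^* = v_0$ and $v_1^* = -v_1$), the exponent splits into a real part coming from $(v_h^*-v_h)/(2h)$ and an imaginary part of the form $S_{\hat p}(x,y)/h$ plus corrections. In the real part, the $-v_1$ contribution is explicit and the $h^2(w_\gamma^*-w_\gamma)$ contribution is a smooth $O(h)$ term, since $w_\gamma,w_\gamma^*$ are polynomials in $h$ with smooth coefficients. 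The $g_h^*-g_h$ contribution is treated in two steps: first, \eqref{eq1}--\eqref{eq2} is used to replace $(g_h^*-g_h)(x)$ by $(g_h^*-g_h)(x_h)$ at uniform cost $O(h^{\gamma/2})$; second, the normalization identity \eqref{normalization} is used to replace the latter by the exact expression $2hv_1(x_h)+h^2[w_\gamma(x_h)-w_\gamma^*(x_h)]$. After summing at $x\pm y/2$ and dividing by $2h$, the real part of the exponent becomes
\[
-v_1(x+\tfrac{y}{2})-v_1(x-\tfrac{y}{2})+2v_1(x_h)+h\,R(x,y,h)+O(h^{(\gamma-2)/2}),
\]
with $R$ smooth and uniformly bounded in $h$ together with all derivatives. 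The imaginary part is handled the same way: its leading piece reassembles into $S_{\hat p}(x,y)/h$, the $w$-corrections give $h\,I(x,y,h)$ with $I$ smooth and uniformly bounded, and the $(g_h+g_h^*)$-contribution $[(g_h+g_h^*)(x+y/2)-(g_h+g_h^*)(x-y/2)]/(2h)$ is $O(h^{(\gamma-2)/2})$ by \eqref{eq1}--\eqref{eq2}.

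Next I would exponentiate. Setting $C:=R+iI$ (smooth and uniformly bounded in $h$ with all derivatives) and $\widetilde\eta(x,y,h):=(e^{hC(x,y,h)}-1)/h$ (equally smooth and uniformly bounded together with all derivatives, since $hC=O(h)$ and the Maclaurin series converges uniformly), the elementary identity $e^{hC+O(h^{(\gamma-2)/2})}=1+h\widetilde\eta+O(h^{(\gamma-2)/2})$ puts the integrand of $\tilde F$ into the form
\[
f(x,2\pi\hat p)\,e^{-v_1(x+y/2)-v_1(x-y/2)+2v_1(x_h)}\,e^{iS_{\hat p}(x,y)/h}\bigl[1+h\,\widetilde\eta(x,y,h)+O(h^{(\gamma-2)/2})\bigr].
\]
It remains to replace $e^{2v_1(x_h)}$ by $e^{2v_1(\bar x)}$. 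Passing to the limit $h\to 0$ in the equation $-2hv_1(x_h)+h^2(\cdots)+(g_h^*-g_h)(x_h)=0$ defining $x_h$ forces $v_1(\bar x)=0$, and provided $v_1$ is non-degenerate at $\bar x$, a perturbation argument yields $|x_h-\bar x|=O(h)$, whence $e^{2v_1(x_h)}=e^{2v_1(\bar x)}[1+O(h)]$. The extra $O(h)$ factor is absorbed into a redefined $\eta$, completing the derivation.

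The main obstacle is this last step, because $g_h^*-g_h$ has no controlled derivatives (only the $C^0$-estimate \eqref{eq1}--\eqref{eq2} is available), so the implicit function theorem cannot be applied to the defining equation of $x_h$ directly. One must instead bootstrap, combining the $C^0$ control of $(g_h^*-g_h)(x)-(g_h^*-g_h)(x_h)$, the smoothness of $v_1$ and of the polynomial corrections $w_\gamma,w_\gamma^*$, and the non-degeneracy of $v_1$ at $\bar x$, to deduce $|x_h-\bar x|=O(h)$. A secondary point, implicit in the statement, is that the natural $\widetilde\eta$ depends also on $y$: to match the proposition's notation $\eta(x,2\pi\hat p,h)$ one either reads $\eta$ loosely as a function of all variables present in the integrand, or uses integration by parts in $y$ against $e^{iS_{\hat p}/h}$ to transfer the $y$-dependence into additional factors of $h$.
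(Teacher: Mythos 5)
Your main line of argument matches the paper's: write $v_h=\hat v^\gamma_h+g_h$, $v_h^*=\hat v^{\gamma*}_h+g_h^*$, split the exponent into a real and an imaginary piece, use $v_0^*=v_0$, $v_1^*=-v_1$ and the normalization identity \eqref{normalization} to isolate the factor $e^{-v_1(x+y/2)-v_1(x-y/2)+2v_1(x_h)}\,e^{iS_{\hat p}(x,y)/h}$, absorb the smooth polynomial corrections $w_\gamma,w_\gamma^*$ into an $O(h)$ factor (your $e^{hC}-1$, the paper's $e^{h\omega_1}-1$, $e^{h\omega_2}-1$), and control the remaining $g$-difference terms by the $L^\infty$ bounds \eqref{eq1}--\eqref{eq2} to get an $O(h^{(\gamma-2)/2})$ error. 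That part is correct and is what the paper does. You also correctly flag that the natural $\eta$ depends on $y$, which the paper's notation $\eta(x,2\pi p,h)$ suppresses.

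The step you add to pass from $v_1(x_h)$ to $v_1(\bar x)$ is where there is a genuine error. Expanding the defining relation $v_h^*(x_h)=v_h(x_h)$ gives
\begin{equation*}
(g_h^*-g_h)(x_h)=2h\,v_1(x_h)+h^2\bigl(w_\gamma(x_h)-w_\gamma^*(x_h)\bigr),
\end{equation*}
which is precisely \eqref{normalization} rearranged. It is therefore a tautology that holds for every $h$ by the definition of $g_h,g_h^*$; it does not constrain $x_h$ further and certainly does not yield $v_1(\bar x)=0$ in the limit. Indeed $v_1=\tfrac12\ln p^+_P$, so $v_1(\bar x)=0$ would force $p^+_P(\bar x)=1$, which is false in general. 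Consequently the non-degeneracy hypothesis and the implicit-function (or bootstrap) argument aiming at $|x_h-\bar x|=O(h)$ have no basis here, and no quantitative rate $e^{2v_1(x_h)}=e^{2v_1(\bar x)}(1+O(h))$ follows. Note that the paper's own proof stops at \eqref{eter1} and \eqref{et2}, which both carry $v_1(x_h)$, not $v_1(\bar x)$; the replacement $x_h\rightsquigarrow\bar x$ happens only at the level of the final limit in Theorem~\ref{teo1} (along a subsequence, using continuity of $v_1$ and the fact that the factor $e^{2v_1(x_h)}$ is a global multiplicative normalization constant), not inside this proposition. The cleanest repair of your write-up is to keep $v_1(x_h)$ throughout, prove the proposition with $\bar x$ read as $x_h$, and only at the very end of the semiclassical limit replace $e^{2v_1(x_h)}\to e^{2v_1(\bar x)}$ by continuity, dropping entirely the claim $v_1(\bar x)=0$ and the $O(h)$ rate for $|x_h-\bar x|$.
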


\begin{proof}

First note that
\begin{align*}
&\quad\frac{v^*_h(x+\frac{y}2)-v_h(x+\frac{y}2)+
v^*_h(x-\frac{y}2)-v_h(x-\frac{y}2)}{2h}\\
&=\frac{g_h^*(x+\frac{y}2)-g_h(x+\frac{y}2)+g_h^*(x-\frac{y}2)-g_h(x-\frac{y}2)-2g^*_h(x_h)+2g_h(x_h)}{2h} \\
&+\frac{\hat v^{\gamma*}_h(x+\frac{y}2)-\hat v^\gamma_h(x+\frac{y}2)+
\hat v^{\gamma*}_h(x-\frac{y}2)-\hat v^\gamma_h(x-\frac{y}2)+2g^*_h(x_h)-2g_h(x_h)}{2h}.
\end{align*}
Using  equations \eqref{eq4} and \eqref{normalization},  we get
\begin{align*}
&\quad \frac{v^*_h(x+\frac{y}2)-v_h(x+\frac{y}2)+
v^*_h(x-\frac{y}2)-v_h(x-\frac{y}2)}{2h}
\\
&= \frac{g_h^*(x+\frac{y}2)-g_h(x+\frac{y}2)+g_h^*(x-\frac{y}2)-g_h(x-\frac{y}2)-2g^*_h(x_h)+2g_h(x_h)}{2h}
\\
&-v_1(x+\frac{y}2)-v_1(x-\frac{y}2)+ 2v_1(x_h)\\
&
+ \frac{h}{2}[ w_\gamma^*(x+\frac{y}2)+w_\gamma^*(x-\frac{y}2)-w_\gamma(x+\frac{y}2)
-w_\gamma(x-\frac{y}2)-2w_\gamma^*(x_h)+2w_\gamma(x_h)   ].
\end{align*}
%As will be proven in last section, $v_2,v_2^*$ are smooth, hence $v_2,v_2^*$ are bounded in $\Tt$, also
Let
\begin{align*}
&\omega_1(h, x, y)\\&=
\frac{ w_\gamma^*(x+\frac{y}2)+w_\gamma^*(x-\frac{y}2)-w_\gamma(x+\frac{y}2)
-w_\gamma(x-\frac{y}2)-2w_\gamma^*(x_h)+2w_\gamma(x_h)   }{2}.
\end{align*}
Using equations \eqref{eq1} and \eqref{eq2}  we get
\begin{align}
\label{eter1}
&e^{\frac{v^*_h(x+\frac{y}2)-v_h(x+\frac{y}2)+
v^*_h(x-\frac{y}2)-v_h(x-\frac{y}2)}{2h}} \\\notag
&= e^{-v_1(x+\frac{y}2)-v_1(x-\frac{y}2)+2v_1(x_h)} e^{h \omega_1(h, x, y)}e^{ O(h^{\frac{\gamma-2}{2}})        }.\\\notag
&=e^{-v_1(x+\frac{y}2)-v_1(x-\frac{y}2)+2v_1(x_h)}\\\notag
&\quad +e^{-v_1(x+\frac{y}2)-v_1(x-\frac{y}2)+2v_1(x_h)} e^{h \omega_1(h, x, y)}-1)\\\notag
&\quad +e^{-v_1(x+\frac{y}2)-v_1(x-\frac{y}2)+2v_1(x_h)} e^{h \omega_1(h, x, y)} (e^{ O(h^{\frac{\gamma-2}{2}})        }-1).
\end{align}
We should note that the last term of the
previous equality is $O(h^{\frac{\gamma-2}{2}})$ in $L^\infty$. The second term is $O(h)$
in the $C^k$ topology for any $k$.

%%%%%%%

By the same kind of reasoning we get
\begin{align}
&\frac{v^*_h(x+\frac{y}2)+v_h(x+\frac{y}2)  -
v^*_h(x-\frac{y}2)-v_h(x-\frac{y}2)}{2 h}\\\notag
&= \frac{v_0(x+\frac{y}2) - v_0(x-\frac{y}2)         }{h}+\\\notag
&\frac{h}2\bigg[w_\gamma^*(x+\frac{y}2)-w_\gamma^*(x-\frac{y}2)+w_\gamma(x+\frac{y}2)
-w_\gamma(x-\frac{y}2)\bigg]
\\\notag
&+\frac{g_h^*(x+\frac{y}2)-g^*_h(x_h)-g_h^*(x-\frac{y}2)+g^*_h(x_h)}{2h}\\\notag
&+\frac{g_h(x+\frac{y}2)-g_h(x_h)-g_h(x-\frac{y}2)+g_h(x_h)}{2h}.
\end{align}
Define
\[
\omega_2(x, y, h)=\frac{w_\gamma^*(x+\frac{y}2)-w_\gamma^*(x-\frac{y}2)+w_\gamma(x+\frac{y}2)
-w_\gamma(x-\frac{y}2)}{2}.
\]
Then
\begin{align}
\label{et2}
&e^{i[P\cdot \frac{y}{h}+\frac{v^*_h(x+\frac{y}2)+v_h(x+\frac{y}2)  -
v^*_h(x-\frac{y}2)-v_h(x-\frac{y}2)}{2 h} -\frac{2\pi
\hat{p} y}{h} ]}=e^{\frac{i}{h}S_{\hat p}(x,y)}e^{h \omega_2}e^{O(h^{\frac{\gamma-2}2})} \\\notag
&=  e^{\frac{i}{h}S_{\hat p}(x,y)}+e^{\frac{i}{h}S_{\hat p}(x,y)} (e^{h \omega_2}-1)+
e^{h \omega_2}e^{h \omega_2}(e^{O(h^{\frac{\gamma-2}2})}-1).
\end{align}
As before, the last term of this identity is $O(h^{\frac{\gamma-2}2})$ in $L^\infty$, the second term
is $O(h)$ in the $C^k$ topology for any $k$.

Therefore, by combining \eqref{eter1} and \eqref{et2}
we obtain the result.

\end{proof}

Let us denote by
$$\zeta(x,y):= e^{[-v_1(x+\frac{y}2)-v_1(x-\frac{y}2)+2v_1(\bar x)] }.$$
% and $$S_{\hat p}(x,y)=P\, y +
%v_0(x+ \frac{y}2) - v_0 (x-\frac{y}2) - \, 2 \, \pi \, \hat{p}\, y.$$

Hence $$I_f(h)=\sum_{\hat{p}\in h \Zz+ \frac{P}{2\pi}}\int_{\Tt}\int_{\Tt}\,  f(x,2\pi\hat{p})\; \zeta(x,y) \;e^{\frac{i}{h}S_{\hat p}(x,y)}\;dx dy.
 $$

We have to estimate the asymptotic of  the limit $\lim_{h\to 0}I_f(h)$.

In this paper we will use several asymptotic expansions for integrals,
namely the non-stationary and stationary phase methods. These
expansions are valid for smooth functions and are uniform depend on
bounds on a finite number of derivatives of the function involved.
Since in all cases the non-stationary or the stationary phase methods
are applied to (fixed) smooth functions all error estimates are
uniform.
In the use of oscillatory integrals it is possible to make expansions of  any order, if  the derivatives of order $k$  of the functions involved decay fast enough. In our case all functions are $C^\infty$, and then one can use the $C^\infty$ topology, or $C^k$ topology for any $k$ large enough.

 To estimate $I_f(h)$ we will use a two dimensional version of the
stationary phase method in the variables $(x,y)$. Let us fix $\hat{p}\in h \Zz + \frac{P}{2\pi}$, the main contribution in the integral  on $x$ and $y$ above  is due to the term $e^{\frac{i}{h}S_{\hat p}(x,y)}$. We have to find the points $(x,y)$ which are critical for $S_{\hat p}(x,y)$.
These are the points $(x,y)$ that are solutions of  the system

\begin{equation}\label{a1}\frac{\partial S_{\hat p}}{\partial y}(x,y)= \,P +\frac{1}2 v_0 ' (x+ \frac{y}2) + \frac{1}2 v_0 '(x-\frac{y}2) - \, 2 \, \pi \, \hat{p}\,=0,
\end{equation}
and
\begin{equation}\label{a2}\frac{\partial S_{\hat p}}{\partial x}(x,y)=\, v_0 '(x+ \frac{y}2) - v_0 ' (x-\frac{y}2)=0.
\end{equation}

Note that, by the definition of $v_0(x)=\phi(x)$, we have $v_0'(x)=p^+(x)-P$. If $x\neq 0$ in the equation \eqref{a2}, by the symmetry of the function $v_0$, the  only  solution  for $ v_0 '(x+ \frac{y}2) - v_0 ' (x-\frac{y}2)=0$ is $y=0$. And,  if $y\neq 0$ in equation \eqref{a2} we must have $x=0$.

 Now, if $y=0$, the   equation \eqref{a1} becomes
\begin{equation}\label{a3}p^+(x)-2\pi\hat p=0. \end{equation}
And, if $x=0$, by the symmetry of the function $p^+$,  the equation \eqref{a1} becomes \begin{equation}\label{a4}p^+(\frac{y}2)-2\pi\hat p=0. \end{equation}

For each $\hat p$ let us call $x_1 (\hat{p})$ and  $x_2 (\hat{p})$ the two points such that $p^+(x_i (\hat{p}))=2\pi \hat p,\;\; i=1,2.$

Remark: Note in figure 1 (for the top curve) that for any given $\hat{p}$ (in the image of  the projection in the $y$ coordinate of the points in the corresponding level of energy) there are two points $x_1,x_2$ such that
$(x_1,\hat{p}),$  $(x_2,\hat{p}),$ are on this level of energy. Due to the convexity of the term
$\frac{1}2\,|p|^2$ in the Hamiltonian $H(x,p)$ the level lines have this shape for the kind of symmetric potential we consider here.

Note that $p^{+} (0)$ is the supremum $p_{max}$ of the function $p^{+}$. Moreover, $p^{+} (-\frac{1}2)=p_{min}.$

For potentials $V$ with a larger number of points of maximum we could have a larger number of pre-images of $\hat{p}$ by $p^{+}$  but the reasoning would be basically the same.

Therefore, we have the following pair of possibilities for $(x,y)$ satisfying both \eqref{a1} and  \eqref{a2} :

 \begin{enumerate}\item $y=0$ and  $x=x_1 (\hat{p})$, or $x=x_2 (\hat{p})$, which are the  solutions of \eqref{a3},

 \item $x=0$ and  $y=2x_1 ( \hat{p}),$ or $y=2x_2 ( \hat{p}),$  which are the  solutions of \eqref{a4}.
\end{enumerate}

Summarizing, we have two possibilities:

(i) For  $\hat p \in h \Zz+ \frac{P}{2\pi} $ such that $2\pi \hat p\in [p^P_{min},p^P_{max}]$, the function $S_{\hat p}(x,y)$ has 4 critical points: $(x_i (\hat{p}),0)$ and  $(0,2x_i (\hat{p}))$, for $i=1,2$.

(ii) For  $\hat p \in h \Zz+ \frac{P}{2\pi} $ such that $2\pi \hat p\notin [p^P_{min},p^P_{max}]$, the function $S_{\hat p}(x,y)$ has no critical points.

In order to prove the theorem \ref{teo1} we need some auxiliary computations that we discuss now.  As the limit of the Wigner distribution is a positive measure (\cite{Zh} consider quite general hypothesis) it is enough to analyze the  following two cases:
\medskip

 Case I) If the support of $f$ is contained
  the strip $\Tt\times (p^P_{min}, p^P_{max})$.

 \medskip
 Case II) Suppose that the support of $f$ is contained in $\Tt\times [c, d]$, with $c,d$  such that
$[c, d] \cap [p^P_{min}, p^P_{max}]=\emptyset$.

\bigskip

Case I) First of all we  point out that when analyzing an estimate for the expression
$$I_f(h)=\sum_{\hat{p}\in h \Zz+ \frac{P}{2\pi}}\int_{\Tt}\int_{\Tt}\,  f(x,2\pi\hat{p})\; \zeta(x,y) \;e^{\frac{i}{h}S_{\hat p}(x,y)}\;dx dy,
 $$
we will have to consider for each $h$, in a separate argument, the contribution of
$$\int_{\Tt}\int_{\Tt}\,  f(x,2\pi\hat{p}_d)\; \zeta(x,y) \;e^{\frac{i}{h}S_{\hat p_d}(x,y)}\;dx dy,$$
where $\hat{p}_d= \hat{p}_d(h)$ is the first point in $h \Zz+ \frac{P}{2\pi}$ bellow $p_{max}^P$. We will estimate this term in the end of our reasoning.

We will denote

\begin{equation}\label{au2}
\sum_{<}\int_{\Tt}\int_{\Tt}\,  f(x,2\pi\hat{p})\; \zeta(x,y) \;e^{\frac{i}{h}S_{\hat p}(x,y)}dx dy,
\end{equation}
the sum is taken over all the $2\, \pi\,\hat{p} <\hat{p}_d$.

\medskip
Given $\epsilon>0$
consider $\eta^\epsilon$, a $C^\infty$ function such that,
$  \eta^\epsilon(y)=1$ for $|y|<\epsilon/2$, and,
$  \eta^\epsilon(y)=0$ for $|y|>\epsilon$.

We write
$$I_1^h (\epsilon)=\sum_{<}\int_{\Tt}\int_{\Tt}\, \eta^\epsilon (y)\, f(x,2\pi\hat{p})\; \zeta(x,y) \cdot
 $$
$$\cdot e^{i[P\cdot \frac{y}{h}+\frac{v_0(x+\frac{y}2)  -
v_0(x-\frac{y}2)}{ h}  -\frac{2\pi i \hat{p} y}{h}]}\;dx dy,
$$
and
$$I_2^h  (\epsilon)=\sum_{<}\int_{\Tt}\int_{\Tt} (1 -\eta^\epsilon (y) )\, f(x,2\pi\hat{p}) \;\zeta(x,y) \cdot
 $$
$$\cdot e^{i[P\cdot \frac{y}{h}+\frac{v_0(x+\frac{y}2)  -
v_0(x-\frac{y}2)}{ h}  ]}e^{-\frac{2\pi i \hat{p} y}{h}} \;dx dy.
$$

\begin{lem}\label{lema1} For any fixed $\epsilon$, we have
$\displaystyle \lim_{h\to 0} I_2^h(\epsilon) =0.$  More precisely we show that the expression goes like $O( h^\infty)$.

\end{lem}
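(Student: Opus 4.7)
The plan is to exploit that the sum over the lattice $\hat p\in h\Zz+P/(2\pi)$ behaves, in the limit $h\to 0$, like an integral against a Dirac comb in $y$; once $y$ is cut away from the origin this comb contributes nothing. First I would substitute $\hat p=hm+P/(2\pi)$, so that the $Py/h$ and the $-2\pi\hat p y/h$ contributions in the exponent combine to the $m$--independent expression $-2\pi m y$, yielding
\[
e^{\frac{i}{h}S_{\hat p}(x,y)} \;=\; e^{\frac{i}{h}[v_0(x+y/2)-v_0(x-y/2)]}\,e^{-2\pi i m y}.
\]
Since $f$ is compactly supported inside $\Tt\times(p^P_{min},p^P_{max})$, for $h$ small every $m$ that actually contributes satisfies the condition defining $\sum_<$, so the restricted sum and the full sum over $m\in\Zz$ agree. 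Swapping this (effectively finite) sum with the integral gives
\[
I_2^h(\epsilon) \;=\; \int_{\Tt}\!\!\int_{\Tt}(1-\eta^\epsilon(y))\,\zeta(x,y)\,e^{\frac{i}{h}[v_0(x+y/2)-v_0(x-y/2)]}\,\Sigma_h(x,y)\,dx\,dy,
\]
where $\Sigma_h(x,y):=\sum_{m\in\Zz}f(x,P+2\pi h m)\,e^{-2\pi i m y}$.

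The core step is to prove $\Sigma_h(x,y)=O(h^\infty)$ uniformly in $x\in\Tt$ and in $y$ with $|y|\geq\epsilon/2$. I would apply the Poisson summation formula to the function $t\mapsto f(x,P+2\pi h t)$: setting $F_x(u):=f(x,P+u)$ (smooth and compactly supported), its ordinary Fourier transform $\widehat{F}_x$ is Schwartz, and a change of variable gives
\[
\Sigma_h(x,y) \;=\; \frac{1}{2\pi h}\sum_{k\in\Zz}\widehat{F}_x\!\bigl((y+k)/h\bigr).
\]
For $y\in[-\tfrac12,\tfrac12]$ with $|y|\geq\epsilon/2$, every $|y+k|$ is bounded below by some $c_\epsilon>0$. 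Using the rapid decay $|\widehat{F}_x(\eta)|\leq C_N(1+|\eta|)^{-N}$ (with $C_N$ uniform in $x$ since $f\in C^{\infty}_c$) together with the absolute convergence of $\sum_{k\in\Zz}|y+k|^{-N}$ for $N\geq 2$, one obtains $|\Sigma_h(x,y)|\leq C_{N,\epsilon}\,h^{N-1}$ for every $N$.

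Plugging this pointwise bound back, the remaining factors $\zeta$, the unit--modulus phase, and the cutoff $(1-\eta^\epsilon)$ are all uniformly bounded on the compact torus, so $|I_2^h(\epsilon)|\leq C_{N,\epsilon}\,h^{N-1}$ for every $N$, which is exactly the claim $O(h^\infty)$. The principal obstacle I anticipate is bookkeeping: keeping the Poisson--summation constants and the Fourier decay uniform in the parameter $x\in\Tt$, and checking that the endpoint cutoff $\hat p_d$ indeed drops out for small $h$ in Case~I. If Poisson summation proves inconvenient, an equivalent route is repeated summation by parts in $m$: each step extracts a factor $h$ from the finite difference of the smooth function $m\mapsto f(x,P+2\pi h m)$ and a factor $(1-e^{-2\pi i y})^{-1}$ from the geometric partial sums (bounded since $|y|\geq\epsilon/2$ keeps $|1-e^{-2\pi i y}|$ bounded below), yielding the same $O(h^\infty)$ decay after arbitrarily many iterations.
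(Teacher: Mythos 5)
Your proof is correct, and the primary route you take --- Poisson summation --- is genuinely different from the paper's. The paper isolates the $\hat p$--dependence into a function $g(x,\hat p)=f(x,2\pi\hat p)$ (plus the lower-order factors) and uses Abel summation (discrete summation by parts) in the $\hat p$ variable: each application replaces $g$ by a finite difference, picking up one factor of $h$ and one factor of $(e^{2\pi i y}-1)^{-1}$, which is harmless since $|y|\geq\epsilon/2$ on the support of $1-\eta^\epsilon$. Iterating $k$ times and then summing over the $O(h^{-1})$ lattice points gives $O(h^{k-1})$, and since $k$ is arbitrary this is $O(h^\infty)$. Your approach does the same work in one shot: you recognize the sum $\Sigma_h(x,y)=\sum_m f(x,P+2\pi hm)e^{-2\pi imy}$ as a Fourier series sampled on an $h$-spaced lattice, apply Poisson summation, and invoke the Schwartz decay of $\widehat{F}_x$ to read off $O(h^{N-1})$ uniformly for all $N$ at once. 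The two arguments are morally equivalent --- both trade smoothness of $f$ in $p$ for rapid decay once $y$ is cut away from $0$ --- and you even note the summation-by-parts alternative as a fallback, which is exactly the paper's method.

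Two small points worth flagging, both of which you handled but which deserve care. First, the observation that the restricted sum $\sum_<$ and the full sum over $m\in\Zz$ agree for small $h$ is correct precisely because the support of $f$ in the $p$-variable is assumed to avoid the lines $p=p^P_{\min}$ and $p=p^P_{\max}$; the only omitted lattice point $\hat p_d$ lies within $O(h)$ of $p^P_{\max}$, so $f$ vanishes there once $h$ is small enough. The paper instead carries $\hat p_d$ as a separate boundary term and estimates it with a non-stationary phase argument. Second, uniformity of the Fourier-decay constants in $x$ follows from $f\in C^\infty_c(\Tt\times\Rr)$, and the interchange of the (effectively finite) $m$-sum with the $(x,y)$-integral is legitimate at fixed $h$; both are as you stated. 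Your route also has the advantage of producing the $O(h^\infty)$ estimate explicitly, whereas the paper displays only two iterations yielding $O(h)$ and asserts the improvement without writing it out.
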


\Pf

Consider a $C^\infty$ function $g(x, \hat{p})$ with compact support with all its derivatives with respect to $\hat{p}$ uniformly   bounded in $h$. Then, for fixed  $x$ and $y\neq 0$ and $A$ a bounded set we have
$$ \sum_{\hat{p}\in A,\, <} \, g(x,\hat{p} )e^{\frac{2 \pi \, i\,  \hat{p}\, y}{h
}}= \sum_{\hat{p}\in A,\, <} \, g(x,\hat{p} )  \frac{e^{\frac{2 \pi \, i\,  (\hat{p}+h)\, y}{h
}}-  e^{\frac{2 \pi \, i\,  \hat{p}\, y}{h
}}}{e^{2 \pi \, i\,  \, y} -1 } =$$
$$ \sum_{\hat{p}\in A,\, <} \,  \frac{ g( x,\hat{p} -h)- g( x, \hat{p})}
{e^{2 \pi \, i\,  \, y} -1 }\,e^{\frac{2 \pi \, i\,  \hat{p}\, y}{h
}}
 = \sum_{\hat{p}\in A,\, <} \,  \frac{ O(h)}
{e^{2 \pi \, i\,  \, y} -1 }\,e^{\frac{2 \pi \, i\,  \hat{p}\, y}{h
}}.
$$

We are using above the notation:   $\sum_{\hat{p}\in A,\, <}$ is the sum over the points in $A$ which are smaller then $\hat{p}_d$.

Note that in the above reasoning it is fundamental that  $y\neq 0$.

We can apply the above reasoning twice
$$ \sum_{\hat{p}\in A,\, <} \, g(x,\hat{p} )e^{\frac{2 \pi \, i\,  \hat{p}\, y}{h
}}=\sum_{\hat{p}\in A,\, <} \,  \frac{ g(x, \hat{p} -h)- g( x, \hat{p})}
{e^{2 \pi \, i\,  \, y} -1 }\,e^{\frac{2 \pi \, i\,  \hat{p}\, y}{h
}}
 =$$
 $$ \sum_{\hat{p}\in A,\, <} \,  \frac{ g(x, \hat{p} -h)- g( x, \hat{p})}
{e^{2 \pi \, i\,  \, y} -1 }\,\frac{e^{\frac{2 \pi \, i\,  (\hat{p}+h)\, y}{h
}}-  e^{\frac{2 \pi \, i\,  \hat{p}\, y}{h
}}}{e^{2 \pi \, i\,  \, y} -1 }
 =$$
$$ \sum_{\hat{p}\in A,\, <} \,  \frac{ g( x,\hat{p} -2h)-\,2\, g( x,\hat{p}-h )+ g (x,\hat{p} )}
{e^{2 \pi \, i\,  \, y} -1 }\,e^{\frac{2 \pi \, i\,  \hat{p}\, y}{h
}}$$
$$ =\sum_{\hat{p}\in A,\, <} \,  \frac{ O(h^2)}
{e^{2 \pi \, i\,  \, y} -1 }\,e^{\frac{2 \pi \, i\,  \hat{p}\, y}{h
}}.$$

Let $K\subset \mathbb{R}$ be a bounded set such that $f(x,2 \pi p)=0$ if $p\notin K$,  we can  write   $$I_2^h (\epsilon)=
\int_{\Tt}\int_{\Tt}(1 -\eta^\epsilon (y) )\;\zeta(x,y)\; e^{i[P\cdot \frac{y}{h}+\frac{v_0(x+\frac{y}2)  -
v_0(x-\frac{y}2)}{ h}  ]} \cdot $$
$$ \cdot \sum_{\hat{p}\in h \Zz\cap K,\, <}
 f(x,2\pi\hat{p})\,e^{\frac{2 \pi \, i\,  \hat{p}\, y}{h
}} dx dy ,$$
and using the above calculation { for  $g=f$ (note
that derivatives of $f$ with respect to $\hat{p}$ are well
controlled)}, we have that

%$$ I_2^h (\epsilon)= O(h^2)\sum_{\hat{p}\in h \Zz\cap K,\, <}
%\int_{\Tt}\int_{\Tt}\frac{1 -\eta^\epsilon (y) }{e^{2 \pi \, i\,  \, y} -1}
%\;\zeta(x,y)\; e^{\frac{i}h S_{\hat p}(x,y)} dx dy $$

%Hence

$$|I_2^h (\epsilon)|\leq O(h^2)\sum_{\hat{p}\in h \Zz\cap K,\, <}
\int_{\Tt}\int_{\Tt}\left |\frac{1 -\eta^\epsilon (y) }{e^{2 \pi \, i\,  \, y} -1}
\;\zeta(x,y)\; e^{\frac{i}h S_{\hat p}(x,y)}\right | dx dy \sim$$

$$\sim O(h)\int_{K}
\int_{\Tt}\int_{\Tt} dx dy dp $$

since $\displaystyle \left|\frac{1 -\eta^\epsilon (y) }{e^{2 \pi \, i\,  \, y} -1}
\;\zeta(x,y) e^{\frac{i}h S_{\hat p}(x,y)}\right|\leq C,$ for some constant $C$.

%and, integrating on $x$ , $y$ and $p$ the lemma follows,
\cqd

We now need to estimate $I_1^h$, for which  we need to consider two cases Ia) and Ib) depending on whether the support of $f$ intersects or not the energy level $\overline H(P)$.
\bigskip

Case Ia) Suppose that the support of $f$ does intersect the energy level $\overline H(P)$.
%$$I_1^h (\epsilon)=\sum_{<}\int_{\Tt}\int_{\Tt}\, \eta^\epsilon (y)\, f(x,2\pi\hat{p})\; \zeta(x,y)\,  e^{\frac{i}h S_{\hat p}(x,y)}\;dx dy. $$

For each $\hat p$ we define a function

$$I_{\epsilon}^h(\hat p):=\int_{\Tt}\int_{\Tt}\, \eta^\epsilon (y)\, f(x,2\pi\hat{p})\, \zeta(x,y)\, e^{\frac{i}h S_{\hat p}(x,y)}dx dy,$$
hence $\displaystyle  I_1^h (\epsilon)=\sum_{\hat{p}\in h \Zz\cap K, \,<}I_{\epsilon}^h(\hat p)$,\, where $K$ is the $p$-projection of the support of $f$.

By the hypothesis of case I we are considering only points $p_{min}^P< \hat{p}< p_{max}^P$. In order to use  the stationary phase method to estimate $I_{\epsilon}^h(\hat p)$, we need to analyze the points $(x,y)$ such that $\nabla S_{\hat p}(x,y)=0$, it was shown that there are 4 points satisfying this, they are $(x_i(\hat p),0)$ and $(0,2x_i(\hat p))$, for $i=1,2$.
We need to calculate $D^2 S (x,y)$ for these 4 points. For $(x_i(\hat p),0)$ we have that
 $$ D^2 S (x_i(\hat p),0)= \left(
\begin{array}{cc}
0 & v_0 '' (x_i(\hat p)) \\
v_0''(x_i(\hat p))  & 0
\end{array}\right),
$$
as $2\pi \hat p\neq p^+(0)$, we have that $x_i(\hat p)\neq 0$. Hence $v_0''(x_i(\hat p))\neq 0$. Also $ \sqrt{| \text{det}\, ( D^2 S(x_i(\hat p),0))|}=|v''_0(x_i(\hat p))|\neq 0 .$

For $(0,2x_i(\hat p))$ we obtain

 $$ D^2 S (0,2x_i(\hat p))= \left(
\begin{array}{cc}
 v_0 '' (2x_i(\hat p))& 0 \\
0& v_0''(2x_i(\hat p))
\end{array}\right),
$$
$i=1,2$.

To have  $v_0''(2x_i(\hat p))\neq 0$ we need that $2x_i(\hat p)\neq 0$, this happens because  $2\pi \hat p\neq p^+(0)$, and that $y_i ( \hat{p})=2x_i(\hat p)\neq \pm 1$, but these points are avoided because $\eta^{\epsilon}(1)=\eta^{\epsilon}(-1)=0$.
Hence  $  \sqrt{| \text{det}\, ( D^2 S(0,2x_i(\hat p)))|}=|v''_0(2x_i(\hat p))|\neq 0 .$

Now we  use  the
stationary phase method, and note that $S_{\hat p}(x_i(\hat p),0)=0$, to obtain that
$$I_{\epsilon}^h(\hat p)=\sum_{i=1}^2\frac{h 2\pi}{|v''_0(x_i(\hat p))|}\, f(x_i(\hat p),2\pi\hat{p}) \;\zeta(x_i(\hat p),0) \; [1+O(\sqrt{h})]  + $$
 $$+\sum_{i=1}^2\frac{h 2\pi}{|v''_0(2x_i(\hat p))|}\eta^\epsilon (2x_i(\hat p))\, f(0,2\pi\hat{p})\;
 \zeta(0,2x_i(\hat p)) e^{\frac{i}h S_{\hat p}(0,2x_i(\hat p))}  [1+O(\sqrt{h})] .$$

%{\bf This is what we want to show. The second term will do no harm
%because we can choose $\epsilon$ small in such way that the integral
%of the second term is small (due the choice of $\eta$). The point
%$(0,p_{max})$ is in the support of the limit measure but has total
%mass zero. }

%Note that $S_{\hat p}(x_i(\hat p),0)=0$ and $\displaystyle S_{\hat
%p}(0,x_i(\hat p))= Px_i(\hat p)+v_0(\frac{x_i(\hat
%p)}2)-v_0(-\frac{x_i(\hat p)}2)-2\pi\hat p x_i(\hat p) $.
%=\int_{-\frac{x_i(\hat p)}2}^{\frac{x_i(\hat p)}2} p^+(s)ds-2\pi\hat p x_i(\hat p)

Therefore
$$I_1^h (\epsilon)=J_1^h+J_2^h(\epsilon),$$ where
$$J_1^h=: \sum_{\hat{p}\in ( h \Zz+ \frac{P}{2\pi})\cap K, \, <}\,\,\sum_{i=1}^2\frac{2\pi h}{|v''_0(x_i(\hat p))|}\left[f(x_i(\hat p),2\pi\hat{p})\; \zeta(x_i(\hat p),0) \; [1+O(\sqrt{h})]\right] $$ and
$$J_2^h(\epsilon)=:\sum_{\hat{p}\in ( h \Zz+ \frac{P}{2\pi})\cap K, \, <}\,\,\,\sum_{i=1}^2\frac{2\pi h}{|v''_0(2x_i(\hat p))|}\,\cdot$$
$$\cdot\left[ \eta^\epsilon (2x_i(\hat p))\, f(0,2\pi\hat{p})\;
 \zeta(0,2x_i(\hat p)) e^{\frac{i}h S_{\hat p}(0,2x_i(\hat p))}  [1+O(\sqrt{h})]\right].$$

 Remember that $p^+(x_i (\hat{p}))=2\pi \hat p,\;\; i=1,2$  (see Remark just after (\ref{a4})). For each $p\in\mathbb{R}$ we define $x_i( p)$, $i=1,2$ be the points satisfying
\begin{equation}p^+(x_i( p))=2\pi p .\end{equation}
Therefore, when $h\to 0$ we have the following estimate
$$J_1^h (\epsilon)\sim 2\pi \sum^{2}_{i=1} \int_K\frac{f(x_i( p),2\pi{p})\; \zeta(x_i( p),0)}{|v''_0(x_i( p))|}dp$$
and
  $$J_2^h (\epsilon)\sim 2\pi \sum^{2}_{i=1} \int_K\frac{\eta^\epsilon (2x_i( p))f(0,2\pi{p})\; \zeta(0,2x_i( p))\;e^{\frac{i}h S_{ p}(0,2x_i( p))}}{|v''_0(2x_i( p))|}dp.$$

\bigskip

The trouble here is that  $v_0 '' (2x_i( {p})  )$   is  small if $x_i(p)$ is near to 0. Let us estimate  $v_0 '' (2x_i( {p})  )$.

 We have $v_0 '' (2x_i( {p})  )= v_0'''(0)\, 2x_i( {p}) \,+ O(x_i( {p})^2) $, we know that

 \noindent $p^+(x_i( {p}))=\sqrt{2[\Hh(P)-V(x_i( {p}))]}$ and
$p_{max}=\sqrt{2[\Hh(P)-V(0)]}$.

 Therefore $$p_{max}^2-(2\pi  p)^2=V(x_i( {p}))-V(0)=x_i( {p})^2 V''(0)+O(x_i( {p})^3),$$
this implies
$$ C x_i( {p})^2= 2 p_{max} (p_{max}-2\pi  p) -(p_{max}-2\pi  p)^2  +O(x_i( {p})^3),    $$
then
$$x_i( {p})^2(C+O(x_i( {p}))=(p_{max}-2\pi  p)(D-(p_{max}-2\pi  p)),  $$
hence $$x_i( {p})^2=(p_{max}-2\pi  p)\left[\frac{D-(p_{max}-2\pi  p)}{C+O(x_i( {p}))} \right] . $$
For $\epsilon>0 $  small enough, if $ |x_i(p)|\leq \epsilon$ we have that $|p_{max}-2\pi  p|\leq \epsilon^2 C$.
And
$$|v_0 '' (2 x_i( {p})  )|\approx  \tilde C \sqrt{p_{max}-2\pi  p} \,+ O({p_{max}-2\pi  p})=$$
$$= \sqrt{p_{max}-2\pi  p}\;(\tilde C+O(\sqrt{{p_{max}-2\pi  p}}\;)\approx \tilde C\sqrt{p_{max}-2\pi  p}\;.$$

\bigskip

%Therefore
%$$J_1^h (\epsilon)\sim 2\pi \sum^{2}_{i=1} \int_{A(\epsilon)\cap K}\frac{f(x_i( p),2\pi{p})\; \zeta(x_i( p),0)}{\tilde C\sqrt{p_{max}-2\pi  p}}\;dp \; + $$
%$$+2\pi \sum^{2}_{i=1} \int_{B(\epsilon)\cap K} \frac{f(x_i( p),2\pi{p})\; \zeta(x_i( p),0)}{|v''_0(x_i( p))|}\;dp   $$
%where ${A(\epsilon)}=\{p\;|\;0< p_{max}-2\pi p\leq\epsilon\} $ and $B(\epsilon)=\{p\;|\; p_{max}-2\pi p\geq\epsilon\}.$
%The first term
%$$\left |2\pi \sum^{2}_{i=1} \int_{A(\epsilon)\cap K}\frac{f(x_i( p),2\pi{p})\; \zeta(x_i( p),0)}{\tilde C\sqrt{p_{max}-2\pi  p}}\;dp\right| \leq C \int_{A(\epsilon)}\frac{1}{\sqrt{p_{max}-2\pi  p}}\,dp$$
%The contribution over the variable $p$ is like $\int_{0}^{\epsilon}
%\frac{1}{\sqrt{p} }\, dp$, which  gives a factor of
%$\sqrt{\epsilon}$.

 Now we estimate $J_2^h (\epsilon)$
$$ |J_2^h (\epsilon)|\leq 2\pi h\sum_{\hat{p}\in h \Zz\cap K, \, <}\sum_{i=1}^2\frac{M}{|v''_0(2x_i(\hat p))|} \eta^\epsilon (2x_i(\hat p))\,\sim$$
 $$\sim 2\pi\sum_{i=1}^2 \int_K \eta^\epsilon (2x_i( p))\frac{\tilde M}{\sqrt{p_{max}-2\pi  p}} \; dp\leq\tilde M 2\pi\sum_{i=1}^2 \int_{C(\epsilon)} \frac{1}{\sqrt{p_{max}-2\pi  p}}\,dp, $$ where $C(\epsilon)=\{p\;|\, 0< p_{max}-2\pi p\leq C\epsilon^2\}$.
%\textbf{nos  somatorios acima $ 2\pi\hat p\neq p_{max}$.}
The contribution over the variable $p$ is like $\int_{0}^{C\epsilon^2}
\frac{1}{\sqrt{p} }\, dp$, which  gives a factor of
$\sqrt{C\epsilon^2}$. { We point out that in the Riemann  sum above (which does not contain the contribution of $ \hat{p}_d$) the error of approximation for the integral  $\int_{0}^{C\epsilon^2}
\frac{1}{\sqrt{p} }\, dp$ is of order $h$. }

Finally, for each $h$ fixed we  consider $\hat{p}_{h}=\hat{p}_{d(h)}$ as before, and we analyze    the term
$$\int_{\Tt}\int_{\Tt}\,  f(x,2\pi\hat{p}_{h})\; \zeta(x,y) \;e^{\frac{i}{h}S_{\hat p_{h}}(x,y)}\;dx dy.$$
Let $\delta>0 $ be fixed, we define the set $B_{\delta}=\{(x,y) ; |x|\leq {\delta}, y\in [-\frac{1}2, \frac{1}2] \}\cup \{(x,y) ; |y|\leq {\delta}, x\in [-\frac{1}2, \frac{1}2] \}.$ Then, as the stationary points are $(x_i (\hat{p}),0)$ and  $(0,2x_i (\hat{p}))$, for $i=1,2$, we see that this points are in $B_{\delta}$. Hence $|\nabla S_{\hat p_{h}}(x,y) |>K({\delta})  $ in $B_{\delta}^c$, where $K({\delta})$ is uniformly bounded by below as $h\to 0$, for fixed $\delta$.
Therefore we have

$$\left|\int_{\Tt}\int_{\Tt}\,  f(x,2\pi\hat{p}_{h})\; \zeta(x,y) \;e^{\frac{i}{h}S_{\hat p_{h}}(x,y)}\;dx dy\right|\leq$$  $$\leq\left|\int_{B_{{\delta}}}\,  f(x,2\pi\hat{p}_{h})\; \zeta(x,y) \;e^{\frac{i}{h}S_{\hat p_{h}}(x,y)}\;dx dy\right| + $$

$$+\left|\int_{{B_{{\delta}}^c}}\,  f(x,2\pi\hat{p}_{h})\; \zeta(x,y) \;e^{\frac{i}{h}S_{\hat p_{h}}(x,y)}\;dx dy\right|$$

Note that $$\left|\int_{B_{{\delta}}}\,  f(x,2\pi\hat{p}_{h})\; \zeta(x,y) \;e^{\frac{i}{h}S_{\hat p_{h}}(x,y)}\;dx dy\right| \leq C \delta$$

And $$\int_{{B_{{\delta}}^c}}\,  f(x,2\pi\hat{p}_{h})\; \zeta(x,y) \;e^{\frac{i}{h}S_{\hat p_{h}}(x,y)}\;dx dy=$$

$$\frac{h}{i}\int_{\partial ({B_{{\delta}}^c})} e^{\frac{i}h S_{\hat p}(x,y)}\frac{f(x,2\pi\hat{p})\; \zeta(x,y)\,n(x,y)^{T}\nabla S_{\hat p}(x,y)}{|\nabla S_{\hat p}(x,y)|^2}\,
   dS-$$
$$- \frac{h}{i}\int_{ {B_{{\delta}}^c}} e^{\frac{i}h S_{\hat p}(x,y)}\,\nabla\left[\frac{f(x,2\pi\hat{p})\; \zeta(x,y)\,\nabla S_{\hat p}(x,y)}{|\nabla S_{\hat p}(x,y)|^2}\,
  \right] dxdy    $$

  Hence $$\left| \int_{{B_{{\delta}}^c}}\,  f(x,2\pi\hat{p}_{h})\; \zeta(x,y) \;e^{\frac{i}{h}S_{\hat p_{h}}(x,y)}\;dx dy\right| \leq h \bar K({{\delta}}).$$
Thus sending $h\to 0$, and then $\delta \to 0$ we see that this term vanishes.

\bigskip
Now we are able to estimate $I_f(h)$.

 Remember that $\zeta(x,y)= e^{[-v_1(x+\frac{y}2)-v_1(x-\frac{y}2)+2v_1(\bar x)] }$. We will show in the last section that $v_1(x)=\frac{1}{2}\ln (p^+(x))$, hence $\zeta(x_i( p),0)=\frac{p^+(\bar x)}{p^+(x_i( p))} $ and by the definition of $v_0$, $v''_0(x_i( p))=(p^{+})'(x_i( p))$. Therefore
$$J_1^h (\epsilon)\sim 2\pi \sum^{2}_{i=1} \int\,\frac{f(x_i( p),2\pi p)\,p^+( \bar x)} {|(p^{+})'(x_i( p))|\,p^+(x_i( p))}\,dp .$$

Above we are approximating an integral by a Riemann sum.

Hence, as $\epsilon,\delta>0$ are arbitrary we get that
$$I_f(h)\to 2\pi \sum^{2}_{i=1} \int\frac{f(x_i( p),2\pi{p})\;p^+(\bar x)} {|(p^{+})'(x_i( p))|\;p^+(x_i( p))}\,dp\; .$$

It is well known, via action angles variables  (see  \cite{PR} \cite{Ar}), that the projected Mather measure is
given by the density
$$ b_P(x)= b(x) = \frac{\partial^2 G_P}{\partial x\, \partial P}= \,\frac{\frac{\partial \overline{H}(P)}{\partial P} }{ p^+_P (x) }, $$
were $G_P(x)=\int_{-\frac{1}2}^x \, p^+_P (s)ds  $.

Using the change of coordinates $x_i$ in each branch we get that for any given function
$g(x)$ we have
$$ \int g(x) \,b(x) dx = \int  \,
\frac{g( x_i (p)) \,b(x_i (p))}{ (p^+ )\,'(x_i (p)) }\, dp \; .$$

This describes the limit measure in this case.

\medskip

We postpone the analysis of case Ib) as it will use certain results which will be established in case II).

\medskip

Case II) Suppose that the support of $f$ is contained in $\Tt\times [c, d]$, with $c,d$  such that
$[c, d] \cap [p^P_{min}, p^P_{max}]=\emptyset$. Hence, by  item (ii) above, for each $\hat p\in h\Zz+ \frac{P}{2\pi}$ such that $2\pi\hat p\in [c,d]$ we have $\nabla S_{\hat p}(x,y)\neq 0$ for all $(x,y)\in \Tt\times \Tt$. Also, we note that $|\nabla S_{\hat p}(x,y)|^2$ is uniformly bounded away from 0 in all variables.

We  write
 $$I_f(h)=\sum_{\hat{p}\in h \Zz+ \frac{P}{2\pi}}I_h(\hat p),  $$
where $$ I_h(\hat p)=\int_{\Tt}\int_{\Tt}\,  f(x,2\pi\hat{p})\; \zeta(x,y) \;e^{\frac{i}{h}S_{\hat p}(x,y)}\;dx dy.$$

Using  the   stationary phase method when $\Tt\times \Tt$ contains no stationary  points (see \cite{Mi} sec. 5.7,  or Theorem 7.7.1 \cite{Ho}), we get
$$I_h(\hat p)= \frac{h}{i}\int_{\partial (\Tt\times\Tt)} e^{\frac{i}h S_{\hat p}(x,y)}\frac{f(x,2\pi\hat{p})\; \zeta(x,y)\,n(x,y)^{T}\nabla S_{\hat p}(x,y)}{|\nabla S_{\hat p}(x,y)|^2}\,
   dS-$$
$$- \frac{h}{i}\int_{ \Tt\times\Tt} e^{\frac{i}h S_{\hat p}(x,y)}\,\nabla\left[\frac{f(x,2\pi\hat{p})\; \zeta(x,y)\,\nabla S_{\hat p}(x,y)}{|\nabla S_{\hat p}(x,y)|^2}\,
  \right] dxdy   =:F+G, $$
    where $n$ is an exterior normal unit vector and $dS$ denotes an element of surface area on the boundary $\partial (\Tt\times\Tt)$.

Note that the integrand is not $1$-periodic in $y$ so the boundary term does not vanish.

To analyze $F$, note that $$F=\frac{h}{i}\int_{\partial (\Tt\times\Tt)} e^{\frac{i}h S_{\hat p}(x,y)}\frac{f(x,2\pi\hat{p})\;
\zeta(x,y)\,n(x,y)^{T}\nabla S_{\hat p}(x,y)}{|\nabla S_{\hat p}(x,y)|^2}\,
   dS$$

$$=-\frac{h}{i}\int_{-\frac{1}2}^{\frac{1}2} e^{\frac{i}h S_{\hat p}(x,-\frac{1}2)}\frac{f(x,2\pi\hat{p})\;
\zeta(x,-\frac{1}2)\,\frac{\partial S_{\hat p}}{\partial y}(x,-\frac{1}2)}{|\nabla S_{\hat p}(x,-\frac{1}2)|^2}\,
   dx+$$

$$+\frac{h}{i}\int_{-\frac{1}2}^{\frac{1}2} e^{\frac{i}h S_{\hat p}(\frac{1}2,y)}\frac{f(\frac{1}2,2\pi\hat{p})\;
\zeta(\frac{1}2,y)\,\frac{\partial S_{\hat p}}{\partial x}(\frac{1}2,y)}{|\nabla S_{\hat p}(\frac{1}2,y)|^2}\,
   dy+$$

  $$+\frac{h}{i}\int_{-\frac{1}2}^{\frac{1}2} e^{\frac{i}h S_{\hat p}(x,\frac{1}2)}\frac{f(x,2\pi\hat{p})\;
\zeta(x,\frac{1}2)\,\frac{\partial S_{\hat p}}{\partial y}(x,\frac{1}2)}{|\nabla S_{\hat p}(x,\frac{1}2)|^2}\,
   dx-$$

   $$-\frac{h}{i}\int_{-\frac{1}2}^{\frac{1}2} e^{\frac{i}h S_{\hat p}(-\frac{1}2,y)}\frac{f(-\frac{1}2,2\pi\hat{p})\;
\zeta(-\frac{1}2,y)\,\frac{\partial S_{\hat p}}{\partial x}(-\frac{1}2,y)}{|\nabla S_{\hat p}(-\frac{1}2,y)|^2}\,
   dy.$$

As the functions $S_{\hat p}$, $f$ and  $\zeta$ are periodic in the variable $x$,  the sum of second and the fourth term on the right hand side
of the equality above is zero. Also, by the periodicity in $x$ of the functions involved, the first and the third terms are equal to 0.

Hence $F=0$.

Let us now analyze $G$, define
$$\displaystyle g_{\hat p} (x,y):=\nabla\bigg[\frac{f(x,2\pi\hat{p})\; \zeta(x,y)\,\nabla S_{\hat p}(x,y)}{|\nabla S_{\hat p}(x,y)|^2}\bigg],\,
 $$
we can repeat the calculation and get
 $$ G=-\left(\frac{h}{i}\right)^2 \int_{\partial (\Tt\times\Tt)} e^{\frac{i}h S_{\hat p}(x,y)}\frac{ g_{\hat p}(x,y)\,n(x,y)^{T}\nabla S_{\hat p}(x,y)}{|\nabla S_{\hat p}(x,y)|^2}\,
   dS +$$
   $$+\left(\frac{h}{i}\right)^2\int_{ \Tt\times\Tt} e^{\frac{i}h S_{\hat p}(x,y)}\,\nabla\left[\frac{g_{\hat p}(x,y)\,\nabla S_{\hat p}(x,y)}{|\nabla S_{\hat p}(x,y)|^2}\,
  \right] dxdy.
  $$

Then, as $\displaystyle I_f(h)=\sum_{\hat{p}\in h \Zz} F+G$, we have
$$I_f(h)={h}^2 \sum_{\hat{p}\in h \Zz+ \frac{P}{2\pi}}\int_{\partial (\Tt\times\Tt)} e^{\frac{i}h S_{\hat p}(x,y)}\frac{ g_{\hat p}(x,y)\,n(x,y)^{T}\nabla S_{\hat p}(x,y)}{|\nabla S_{\hat p}(x,y)|^2}\,
   dS  - $$
   $$ -h^2\sum_{\hat{p}\in h \Zz+ \frac{P}{2\pi}}\int_{ \Tt\times\Tt} e^{\frac{i}h S_{\hat p}(x,y)}\,\nabla\left[\frac{g_{\hat p}(x,y)\,\nabla S_{\hat p}(x,y)}{|\nabla S_{\hat p}(x,y)|^2}\,
  \right] dxdy =:G_1+G_2.$$

We will  show that $|G_1|=O(h)$, by similar arguments we can show that  $|G_2|=O(h)$. As $|e^{\frac{i}h S_{\hat p}(x,y)}|=1$, we have
  $$|G_1|\leq {h} \left|h\sum_{\hat{p}\in h \Zz+ \frac{P}{2\pi}}\int_{\partial (\Tt\times\Tt)} \frac{ g_{\hat p}(x,y)\,n(x,y)^{T}\nabla S_{\hat p}(x,y)}{|\nabla S_{\hat p}(x,y)|^2}\,
   dS \right|\sim $$
   $$\sim {h} \left|\int\int_{\partial (\Tt\times\Tt)} \frac{ g_{ p}(x,y)\,n(x,y)^{T}\nabla S_{ p}(x,y)}{|\nabla S_{ p}(x,y)|^2}\,
   dS dp \right| =O(h), $$ the last equality is true  because $p\in [c,d]$, all functions involved are sufficiently smooth and $|\nabla S_{\hat p}(x,y)|^2$ is uniformly bounded away from 0.

\medskip
Case Ib) Suppose that the support of $f$ does not intersect the energy level $\overline H(P)$.

 We have already shown that $I_2^h  (\epsilon)=O(h)$, hence we just need to analyze $$I_{1}^h(\epsilon)=\sum_{\hat{p}\in h \Zz+ \frac{P}{2\pi}}\int_{\Tt}\int_{\Tt}\, (\eta^\epsilon (y) )\, f(x,2\pi\hat{p})\, \zeta(x,y)\, e^{\frac{i}h S_{\hat p}(x,y)}dx dy.$$

We will suppose that $\supp (f)\subset [a,b]\times[c,d]=A$, where $A$ is such that $A\cap \{(x,p^+(x))\,|\, x\in[-\frac{1}2,\frac{1}2]\} =\emptyset$.

For each $\hat p$ fixed the  critical points $(x_i(\hat p),0)$ for $S_{\hat p}(x,y)$  do not contribute in the integral, because $f(x_i(\hat p), 2\pi \hat p)=0$. Then or we have the contribution of the points $(0,2x_i(\hat p))$, that we have already shown that is of order $O( {\epsilon})$, or in $A$ there are not stationary points. In this last case,  there exists $K_{\hat p}>0$ such that $|\nabla S_{\hat p}(x,y)|^2>K_{\hat p}$ if $f(x, 2\pi \hat p)\neq 0$. As $\hat p$ lies in a compact set there exists $K>0$, uniformly in $\hat p$, such that $|\nabla S_{\hat p}(x,y)|^2>K$ if $f(x, 2\pi \hat p)\neq 0$.

Also, note that the function $\eta^\epsilon (y)$ is smooth and periodic in $y$. Therefore we can apply the same argument used in case II) to prove that
$|I_1^h(\epsilon)|=O(h)$.

\bigskip
\Pf(of Theorem \ref{teo1}) By the analysis above we see that the limit measure is supported in the graph $\{(x,p^+(x))\,|\, x\in[-\frac{1}2,\frac{1}2]\} $.

Item a) follows by  case Ia),
item b) follows by case Ib) and case II) above.
\cqd

\section{ The asymptotic expansion  of $v_h$ }

\bigskip

Let $v_h$ and $v_h^*$ be $\Tt$-periodic solutions of
$$ - \, \frac{h\,  v_h''}{2}+ \frac{1}{2} \, |\,P +  v_h' \,|^2 + V= \overline{H}_h(P),$$
and,
$$  \, \frac{h\,  {v_h^*}''}{2}+ \frac{1}{2} \, |\,P +  {v_h^*}' \,|^2 + V= \overline{H}_h(P).$$

We will provide rigorous asymptotic  expansions for  $v_h$ and $v_h^*$, as required in the previous sections. We will only present the proofs for $v_h$ as the analysis of $v_h^*$ is completely similar.

For a given $N$ we will construct functions $v_i$, numbers $\overline{H}_i$, $i=0,...,N$, and formal expansions
 $$\hat{v}^N_h= v_0  + h\, v_1 +  h^2\, v_2+... + h^N\,
v_N,$$  and  $$\hat{H}_h^N=  \overline{H}_0 + h  \overline{H}_1 + h^2 \overline{H}_2 +...+ h^N  \overline{H}_N.$$
We will show that
$$\overline H_h=\overline{H}_h(P) =  \hat{H}_h^N+ O(h^{N+1}),$$
and
$$ {v}_h(x)-{v}_h(x_h)=\hat{v}^N_h(x)-\hat{v}^N_h(x_h)+
O(h^{\frac{N+1}2}).  $$

In order to get these estimates  we will derive differential equations which define
$v_i$ and $H_i$ in a suitable way. In fact, $v_0$
and $H_0$ are solutions of the limit problem
$$ \overline{H}_0 =H(v_{0}'(x),x)= V(x) + \frac{1}2 \, ( P + v_{0}'(x) )^2. \,\,\,\,*(1)$$

The remaining equations are obtained by Taylor expansions in formal
powers of $h$. For instance, the following equations will be
$$ \overline{H}_1 =\,- \frac{v_{0}''}2 +  H_p (v_{0}',x)\,v_{1}'=\,- \frac{v_{0}''}2 +
( P + v_{0}' )\,v_{1}',\,\,\,\,*(2)
$$
and
$$ \overline{H}_2 =\,- \frac{v_{1}''}2 +  H_p (v_{0}',x)\,v_{2}'+ \frac{( v_{1}')^2}{2}\, .\,\,\,\,*(3)$$

 All approximations in this section are $O(h^{\frac{N+1}2})$ in the uniform topology.

From $*(1)$ we have that $v_0^{}=\phi$ and $\overline{H}_0=\overline
H(P)$, where $\phi(x) = \int_{-\frac{1}{2}}^x \, p^+_P (s)ds -P\,(
x+\frac{1}{2}).$ Using this last expression in $*(2)$, we have an
equation for $v_1^{}$ and $\overline{H}_1$. The value
$\overline{H}_1$ is obtained by the compatibility condition:
$$ \overline{H}_1 = - \, \int v_{0}''\,d \sigma_0,
$$
where $\sigma_0$ is the projected Mather probability for $h=0$,
which satisfies div $(( P + v_{0}') \, \sigma_0) =0$.

 The property
div $(( P + v_{0}') \, \sigma_0) =0$ follows from the fact that the Mather measure $\mu$ has support on the graph $ (x, P+v_0�(x))$, and also satisfies the holonomic condition
$\int v D_x\varphi (x) d\mu=0$. This implies for the projected measure $\sigma_0$ that
$\int (P+v_0�(x)) \varphi_x d\sigma_0=0$ (see \cite{Ev1}, \cite{BG} or \cite{EG1}).

Then, once determined $\overline{H}_1$, the function $v_1$ is uniquely
determined (up to additive constant) because $( P + v_{0}')$ is
never zero.

Remember that $v_{0}''=(p^+_P)'$ and that the  density of the projected Mather measure is given by
$ b_P(x)= \,\frac{\frac{\partial \overline{H}(P)}{\partial P} }{ p^+_P (x) } .$ Hence

$$ \overline{H}_1 = - \, \frac{\partial \overline{H}(P)}{\partial P}\int_{-\frac{1}2}^{\frac{1}2}  \,\frac{(p^+_P(x))' }{ p^+_P (x) }dx=- \, \frac{\partial \overline{H}(P)}{\partial P} \ln (p^+_P(x))|_{-\frac{1}2}^{\frac{1}2} =0,
$$
and the equation $*(2)$ becomes
$$  0 =\,- \frac{(p^+_P)'}2 +p^+_P \,v_{1}'.
$$
Therefore
$$v_1(x)= \frac{1}2\ln (p^+_P(x)).$$

Using the function $v_1$ in $*(3)$  we have an equation for $v_2$
and $\overline{H}_2$.
There exists a unique number $\overline{H}_2$, given by the compatibility condition
$$ \overline{H}_2\,= \,\int \, [ - \frac{v_{1}''}{2}+ \frac{(
v_{1}')^2}{2}  ] d\sigma_0,$$ to which corresponds a unique (up to
constant) $v_2$. Again, the solvability is ensured by the condition
$( P + v_{0}')\neq 0$.

In this way we obtain, inductively, all the $v_j^{}$, and all the
$\overline{H}_j$, $j=0,1,...,N$. Note that the functions  $v_j$ are
smooth, because $V$ is smooth and consequently $p^+$ also is. %=\sqrt{2(\Hh(P)-V(x))}$

\begin{pro}
For all $N$ and for all $x$
\begin{equation}
\label{aexp}
 - \frac{h (\hat{v}_{h}^{N})''(x)}{2}  + \frac{1}2 |P+(\hat{v}_{h}^{N})'(x)|^2+V(x) =\hat{H}_h^N+ O( h^{N+1})
.
\end{equation}
\end{pro}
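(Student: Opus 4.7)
The plan is to substitute the truncated formal expansion into the left-hand side of \eqref{aexp}, expand in powers of $h$, and verify that the coefficient of $h^k$ for $0 \le k \le N$ is precisely $\overline{H}_k$ by construction, while all remaining contributions are $O(h^{N+1})$ uniformly in $x$.

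First I would write
\[
(\hat v_h^N)'(x) = \sum_{j=0}^N h^j v_j'(x), \qquad (\hat v_h^N)''(x) = \sum_{j=0}^N h^j v_j''(x),
\]
so that
\[
\tfrac12\bigl|P + (\hat v_h^N)'\bigr|^2 = \tfrac12 (P + v_0')^2 + (P + v_0')\sum_{j=1}^N h^j v_j' + \tfrac12 \Bigl(\sum_{j=1}^N h^j v_j'\Bigr)^{\!2}.
\]
Adding $V$ and the term $-\tfrac{h}{2}(\hat v_h^N)'' = -\tfrac12 \sum_{j=0}^N h^{j+1} v_j''$, the coefficient of $h^k$ (for $k \ge 1$) on the left-hand side of \eqref{aexp} equals
\[
E_k(x) := -\tfrac12 v_{k-1}''(x) + (P + v_0'(x))\, v_k'(x) + \tfrac12 \sum_{\substack{i+j=k \\ 1 \le i,\, j \le N}} v_i'(x)\, v_j'(x),
\]
while the coefficient for $k=0$ is $\tfrac12 (P + v_0')^2 + V$.

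The next step is to observe that the defining equations $\ast(1),\ast(2),\ast(3)$, together with their inductive analogues (which, as described in the text, determine each $\overline{H}_k$ as the $\sigma_0$-average needed for solvability and then produce a smooth $v_k$ because $P + v_0' = p_P^+ > 0$), are precisely the statements
\[
\tfrac12 (P + v_0')^2 + V = \overline{H}_0, \qquad E_k(x) = \overline{H}_k \quad \text{for } 1 \le k \le N.
\]
Therefore, after summing the contributions of powers $h^0,\dots,h^N$, the only remaining terms come from the square $\bigl(\sum_{j=1}^N h^j v_j'\bigr)^2$ and produce pure powers $h^{N+1}, h^{N+2},\dots,h^{2N}$, each with coefficient of the form $\tfrac12 \sum_{i+j=k,\,1\le i,j\le N} v_i' v_j'$ with $N+1 \le k \le 2N$.

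Finally, since $V$ and $p_P^+$ are smooth on the compact torus $\mathbb{T}$, every $v_j$ constructed inductively is $C^\infty$ and hence uniformly bounded together with all derivatives, so each of these leftover contributions is uniformly $O(h^{N+1})$, giving \eqref{aexp}. The only step that requires any care is the bookkeeping of the quadratic term; there is no genuine obstacle, because the construction of the $v_k$ and $\overline{H}_k$ was made precisely so that the formal Taylor coefficients match term by term up to order $N$.
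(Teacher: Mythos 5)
Your proof is correct and follows the same strategy as the paper's (which carries out only the case $N=2$ and appeals to analogy for general $N$): substitute the truncated expansion, use that $H$ is exactly quadratic in $p$ so the Taylor expansion terminates, and match the coefficients of $h^0,\dots,h^N$ with $\overline{H}_0,\dots,\overline{H}_N$ by the inductive defining equations, the key point being that $E_k$ depends only on $v_0,\dots,v_k$ so truncation does not disturb these coefficients. One small bookkeeping slip: besides the quadratic remainder from $\tfrac12\bigl(\sum_{j=1}^N h^j v_j'\bigr)^2$, the term $-\tfrac12\sum_{j=0}^N h^{j+1}v_j''$ also leaves the residue $-\tfrac12 h^{N+1}v_N''$, which you should include among the leftover terms; since $v_N$ is smooth on the compact torus this is still uniformly $O(h^{N+1})$, so the conclusion is unaffected.
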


\Rm As the proof below shows, because the expansion $\hat{v}_{h}^{N}$ is a finite sum of smooth functions, the error term in the right
hand-side of \eqref{aexp} is $O(h^{N+1})$ in the $C^k$ topology for all $k\geq 0$.

\Pf
We will show the claim for $N=2$, as all other cases are analogous.
We want to show that
$$  - \frac{h (\hat{v}_{h}^{2})''(x)}2  + H( (\hat{v}_{h}^{2})'(x),x)- \hat H_h^2= O(h^3) .$$

In order to do so, we expand, for each  fixed $x$, $H( (\hat{v}_{h}^{2})'(x),x)$ in Taylor series:
$$ H( (\hat{v}_{h}^{2})'(x),x) =$$
$$=H(v_{0}',x)+hH_p(v_{0}',x)(v_{1}'+hv_{2}')+
\frac{h^2}2H_{pp}(v_{0}',x)(v_{1}'+hv_{2}')^2+O(h^3).   $$
Hence, by definition of $v_i,\overline{H}_i$, $i=0,1,2$ we have
$$ - \frac{h (\hat{v}_{h}^{2})''(x)}2  + H( (\hat{v}_{h}^{2})'(x),x)- \hat H_h^2=
$$
$$=-\frac{h}2(v_{0}''+hv_{1}''+h^2v_{2}'')+H(v_{0}',x)+hH_p(v_{0}',x)(v_{1}'+hv_{2}')+
$$
$$+\frac{h^2}2H_{pp}(v_{0}',x)(v_{1}'+hv_{2}')^2+O(h^3)-(\overline{H}_0+h\overline{H}_1+h^2 \overline{H}_2)=$$
$$=h^3[-v_{2}''+ H_{pp}(v_{0}',x)(2v_{1}'v_{2}'+ h(v_{2}')^2)]+O(h^3) = O(h^3). $$

 \cqd

\begin{pro}
$|\overline{H}_h - \hat{H}^N_h|= O(h^{N+1})$
\end{pro}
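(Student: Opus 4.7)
The plan is to subtract the exact cell-problem equation satisfied by $v_h$ from the approximate identity of the previous proposition satisfied by $\hat v_h^N$, and then extract a scalar compatibility condition by pairing the resulting linear equation against the invariant density of its formal adjoint.

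First I would perform the subtraction. Setting $w_h := v_h - \hat{v}_h^N$ and using the factorization $a^2-b^2=(a-b)(a+b)$ to simplify the quadratic difference, one obtains the linear elliptic equation
$$L_h w_h \;:=\; -\tfrac{h}{2}\, w_h''(x) + b_h(x)\, w_h'(x) \;=\; \bigl(\overline{H}_h - \hat{H}_h^N\bigr) - E_h(x),$$
with smooth $\Tt$-periodic drift $b_h(x) := P + \tfrac{1}{2}\bigl(v_h'(x) + (\hat{v}_h^N)'(x)\bigr)$, satisfying $\int_\Tt b_h\, dx = P$, and with residual $\|E_h\|_{L^\infty} = O(h^{N+1})$ supplied by the previous proposition.

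Next I would exhibit a smooth strictly positive $\Tt$-periodic density $\rho_h$ with $\int_\Tt \rho_h\, dx = 1$ and $L_h^* \rho_h = 0$, where $L_h^*\rho = -\tfrac{h}{2}\rho'' - (b_h \rho)'$. One integration reduces $L_h^*\rho = 0$ to the first-order linear ODE $\tfrac{h}{2}\rho' + b_h \rho = C$; an explicit integrating-factor computation together with $\int_0^1 (2 b_h/h)\,dx = 2P/h > 0$ produces a unique smooth strictly positive periodic solution, which is then normalized.

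Finally I would pair $L_h w_h = (\overline{H}_h - \hat{H}_h^N) - E_h$ with $\rho_h$ and integrate over $\Tt$. Two integrations by parts (all boundary terms vanish by periodicity) give $\int_\Tt L_h w_h \cdot \rho_h\, dx = \int_\Tt w_h\cdot L_h^*\rho_h\, dx = 0$, whence
$$\overline{H}_h - \hat{H}_h^N \;=\; \int_\Tt E_h(x)\,\rho_h(x)\, dx,$$
and $|\overline{H}_h - \hat{H}_h^N| \le \|E_h\|_{L^\infty}\cdot \|\rho_h\|_{L^1} = O(h^{N+1})$ follows immediately. The only potentially delicate ingredient is the construction of $\rho_h$, but in this one-dimensional setting the explicit Fokker--Planck ODE combined with the $h$-independent positivity $\int_\Tt b_h = P > 0$ makes it essentially automatic, so I do not anticipate any serious obstacle.
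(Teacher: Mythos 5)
Your proposal is correct, but it takes a genuinely different route from the paper. The paper splits the estimate into two one-sided bounds: the upper bound $\overline{H}_h - \hat{H}_h^N \le O(h^{N+1})$ is pulled from an inf-sup characterization of the stochastic effective Hamiltonian (citing Gomes' stochastic Aubry--Mather paper), while the lower bound comes from the convexity of $H$ in $p$ together with a maximum-principle argument at a point $x_0$ where $v_h - \hat{v}_h^N$ attains its maximum (there $v_h'-(\hat v_h^N)'=0$ and $v_h''-(\hat v_h^N)''\le 0$). You instead linearize exactly, using $a^2-b^2=(a-b)(a+b)$, to obtain $L_h w_h = (\overline{H}_h-\hat H_h^N)-E_h$ with drift $b_h = P+\tfrac12(v_h'+(\hat v_h^N)')$, then construct the periodic invariant density $\rho_h$ for $L_h^*$ and pair. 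What your approach buys: it is self-contained (no appeal to the external inf-sup formula), it delivers both bounds simultaneously via the exact identity $\overline{H}_h - \hat H_h^N = \int E_h\rho_h$, and it is structurally parallel to the adjoint-density argument the paper itself uses two propositions later for the $L^2$ derivative estimate (there with the known drift $g_h^N=H_p((\hat v_h^N)',x)$ rather than your $v_h$-dependent $b_h$). What the paper's route buys: the lower bound via the maximum principle is completely elementary, requiring no construction of an auxiliary density, and the drift in your construction involves the a priori unknown $v_h'$, so your argument tacitly relies on the smoothness of $v_h$ supplied by elliptic regularity for the viscous Hamilton--Jacobi equation (the paper's max-point argument also needs $v_h\in C^2$, so this is not a real advantage, but it is worth noting explicitly). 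Both approaches are sound given $P>P_{crit}>0$, which is exactly what makes $\int_{\Tt} b_h\,dx = P>0$ and hence the periodic adjoint density solvable.
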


\Pf

From \cite{Gom1}, section 3, we have that
$$\overline{H}_h (P) \leq
\sup_{x} \,\{\, - \frac{h \,(\hat{v}_{h}^{N})''(x)}2 + H( (\hat{v}_{h}^{N})'(x)
,x)\,\}\leq  \, \hat{H}^N_h + O(h^{N+1}).$$
Therefore, $\overline H_h - \hat{H}^N_h \leq O(h^{N+1}).$

As $H(p,x)$ is convex on $p$, we have that
$$\overline H_h - \hat{H}^N_h\geq $$
$$H( v_{h}' (x),x) - H((\hat{v}_{h}^N)' (x),x  ) -\frac{h}2\, (   v_{h}'' (x) - (\hat{v}_{h}^N)'' (x)    )+ O(h^{N+1})\geq  $$
$$ H_p ((\hat{v}_{h}^N)' (x) , x)\,(\,v_{h}' (x)- (\hat{v}_{h}^N)' (x)\, ) \, - \frac{h}2\, (   v_{h}'' (x) - (\hat{v}_{h}^N)'' (x)    )+
O(h^{N+1}).$$

Consider a point $x_0$ where $(   v_{h} (x) - \hat{v}_{h}^N (x) )$
has a maximum. Then, $ (   v_{h}' (x_0) - (\hat{v}_{h}^N)' (x_0)    )=0$
and $h\, ( v_{h}'' (x_0) - (\hat{v}_{h}^N)'' (x_0)    )\leq 0$.

From the above $\overline H_h - \hat{H}^N_h   \geq O(h^{N+1}).$
\cqd

\begin{lem}

There exists a periodic non negative probability density $\sigma_{h}^{N}$ that is a solution
 of the equation
$$ (g^N_h (x)\, \sigma_{h}^{N} (x))' + h (\sigma_{h}^{N})''=0,$$
where $g^N_h(x)=H_p((\hat{v}_{h}^N)'(x) ,x)$. Also, there exist constants $k,K>0$ such that $k\leq\sigma_{h}^{N}(x) \leq K $.
\end{lem}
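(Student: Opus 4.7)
The first step is the observation that the equation is in conservative form:
\[
(g_h^N \sigma)' + h \sigma'' = (h \sigma' + g_h^N \sigma)',
\]
so any periodic solution $\sigma$ must satisfy $h \sigma' + g_h^N \sigma = C$ for some constant $C\in\Rr$. This reduces the lemma to the analysis of a first-order linear ODE with smooth periodic coefficients.

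\textbf{Explicit positive solution.} The preceding asymptotic analysis provides uniform convergence $(\hat v_h^N)' \to v_0' = p_P^+ - P$, so $g_h^N(x) = H_p((\hat v_h^N)'(x),x) = P + (\hat v_h^N)'(x)$ converges uniformly to $p_P^+(x)$. Since $P>P_{crit}$ implies $p_P^+ > 0$ on $\Tt$, there exist constants $0<g_m\leq g_M<\infty$ with $g_m\leq g_h^N(x)\leq g_M$ for every $x$ and every sufficiently small $h$. Setting $G_h^N(x):=\int_0^x g_h^N$, the fact that $(\hat v_h^N)'$ has zero mean on $\Tt$ gives $G_h^N(x+1)-G_h^N(x)=P$ for all $x$. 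Integrating the ODE using the integrating factor $e^{G_h^N/h}$ and imposing periodicity $\rho(0)=\rho(1)$ forces the choice $C=h(e^{P/h}-1)$ and yields the explicit periodic solution
\[
\rho_h^N(x) := e^{-G_h^N(x)/h}\int_x^{x+1} e^{G_h^N(s)/h}\,ds = \int_0^1 e^{\Phi_x(t)/h}\,dt, \qquad \Phi_x(t):=\int_x^{x+t} g_h^N.
\]
Positivity, smoothness, and periodicity in $x$ are immediate (the second identity is manifestly periodic because $g_h^N$ is). Define $\sigma_h^N := \rho_h^N / \int_\Tt \rho_h^N$, which is the required probability density.

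\textbf{Uniform bounds.} The substitution $\tau := 1-t$ rewrites this as
\[
\rho_h^N(x) = e^{P/h}\int_0^1 e^{-\Psi_x(\tau)/h}\,d\tau, \qquad \Psi_x(\tau):=\int_{x+1-\tau}^{x+1} g_h^N(s)\,ds,
\]
and the uniform bounds on $g_h^N$ give $g_m\tau \leq \Psi_x(\tau) \leq g_M\tau$, whence
\[
\frac{h\,(1-e^{-g_M/h})}{g_M} \;\leq\; \int_0^1 e^{-\Psi_x(\tau)/h}\,d\tau \;\leq\; \frac{h\,(1-e^{-g_m/h})}{g_m}.
\]
For small $h$ both sides are comparable to $h$ uniformly in $x$, so $\rho_h^N(x)/(h\,e^{P/h})$ lies in a fixed compact subinterval $[c_1,c_2]\subset(0,\infty)$ independent of $(x,h)$. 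Integrating in $x$ then puts $\int_\Tt \rho_h^N$ in the same interval (up to the common factor $h\,e^{P/h}$), and after division the large factor cancels to give $c_1/c_2 \leq \sigma_h^N(x) \leq c_2/c_1$, uniformly in $x$ and $h$.

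\textbf{Main obstacle.} The only delicate point is precisely this cancellation: the unnormalized density $\rho_h^N$ blows up like $h\,e^{P/h}$ as $h\to 0$, and uniform bounds on $\sigma_h^N$ exist only because the Laplace-type estimate on $\rho_h^N$ is uniform in $x$, so that the large exponential factor disappears identically in the normalization. All other steps (reduction, existence, positivity) are essentially algebraic consequences of the conservative structure of the equation together with the uniform positivity of $g_h^N$ furnished by the preceding asymptotic expansion.
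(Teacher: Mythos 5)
Your proposal is correct and uses the same skeleton as the paper's proof — reduce to the first-order equation $h\sigma' + g_h^N\sigma = \text{const}$ by integrating once, solve with the integrating factor $e^{G_h^N/h}$, and then verify uniform two-sided bounds. The difference is in how the constant is fixed and how the bounds are extracted. The paper keeps both integration constants $c_h$ and $C_h$, normalizes $\sigma_h^N$ to a probability density first, then shows $c_h$ is bounded (from $\int g_h\sigma_h=c_h$), shows $C_h\to 0$, and finally invokes the Laplace method to conclude $\sigma_h^N(x)\to c/(P+v_0')>0$. You instead observe the structural identity $G_h^N(x+1)-G_h^N(x)=P$ (from periodicity of $\hat v_h^N$), which forces the manifestly periodic, positive representation $\rho_h^N(x)=\int_0^1 e^{\Phi_x(t)/h}\,dt$, and then obtain uniform-in-$x$ bounds by the elementary comparison $g_m\tau\le\Psi_x(\tau)\le g_M\tau$, normalizing only at the end. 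What your route buys is a cleaner and more robust uniformity argument: the exponential factor $he^{P/h}$ cancels identically in the normalization, so you never need to track the constants $c_h,C_h$ separately nor invoke Laplace's method as a black box (which, as stated in the paper, is somewhat delicate near $x=0$ where the formula $\frac{\int_0^x e^{G/h}}{he^{G(x)/h}}\sim 1/g_h(x)$ degenerates and the $C_h$ term carries the whole contribution). Both proofs are valid; yours is self-contained and gives the claimed constants $k,K$ more transparently.
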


\Pf
Note that $g^N_h=P+v_{0}'+h v_{1}'+h^2v_{2}'   $ is positive and bounded uniformly  in $h$.
For convenience we will drop the $N$.
By integration, we get that the equality for all $x$
$$ (g_h(x) \sigma_h (x))' + h \sigma_h{}''=0,$$
is equivalent to
$$ g_h(x)\, \sigma_h (x) + h \sigma_h'(x)=c_h,$$
for some suitable constant $c_h$.

We multiply the above expression in each side by the integrating
factor $\alpha_h(x)  = e^{ \frac{G(x)}h}$, where $G(x)= \int_{0}^x g_h(s)
ds.$

In this way,
$$ h\, ( \alpha_h \, \sigma_h)'=h \alpha_h'(x) \sigma_h(x)  +    h \alpha_h(x) \sigma_h'(x)=
$$
$$\alpha_h (x)\,g_h(x)\, \sigma_h (x) + h \alpha_h(x) \sigma_h'(x)=c_h
\alpha_h(x).$$

Therefore,
$$ h\, \alpha_h(x) \sigma_h (x)                  \,=\,c_h \int_0^x \alpha_h (s)\, ds +
C_h,$$
or, equivalently,
$$\sigma_h(x) =\frac{  c_h \int_0^x  e^{ \frac{G(s)}h}(s)\, ds +
C_h   }{e^{ \frac{G(x)}h}  \,h}.$$

The constants $c_h$ and $C_h$ are adjusted so that  $\sigma_h$ is a periodic probability density.

We claim that the constants $c_h$ and $C_h$ are bounded. Note first
that
$$\int _0^1 g_h(x)\, \sigma_h (x)\,dx= \int _0^1 [g(x)\, \sigma_h (x) + h \sigma_h'(x)]\, d x=c_h.$$

As $g_h$ is positive and bounded uniformly on $h$,  and
$\sigma_h$ is a density, we get that  $c_h$ is bounded. In fact, $c_h$
converges to a constant, that we will denote by $c$.

By Laplace method, see \cite{Ol} (chapter 3, section 7)  \cite{Mi}, as $G$ is monotonous increasing, we have
$$  \frac{   \int_0^x e^{ \frac{G(s)}h} \, ds}{e^{\frac{ G(x)}h}
\,h}\sim \frac{1}{g_h(x)}.  $$
  Furthermore, the limit is uniformly
bounded on $h$. Now we will show that $C_h$ is uniformly bounded in
$h$.

As
$$\sigma_h (0) = \sigma_h(1),  $$
we get
$$\frac{C_h}{ h}=\frac{C_h}{e^{ \frac{G(0)}h}  h}=\frac{C_h   + c_h \int_0^1 e^{ \frac{G(s)}h} \, ds}{e^{\frac{ G(1)}h}  h}.$$
Consequently,
$$ C_h = C_h \, e^{ - \frac{G(1)}h}  + \frac{c_h \int_0^1 e^{ \frac{G(s)}h} \, ds}{e^{ \frac{G(1)}h}  h}\, h .$$
From this
$$ C_h (1-\, e^{ - \frac{G(1)}h})  = \frac{c_h \int_0^1 e^{ \frac{G(s)}h} \, ds}{e^{\frac{ G(1)}h}  h}\, h .$$
In this way, $C_h \to 0$, as $h\to 0$. Therefore for any $x$
$$\lim_{h\to 0} \sigma_h(x) =\lim_{h\to 0} \frac{  c_h \int_0^x \alpha_h (s)\, ds + C_h   }{e^{ \frac{G(x)}h}
\,h}= \lim_{h\to 0} \frac{  c_h \int_0^x  e^{\frac{ G(s)}h} \, ds }{e^{
\frac{G(x)}h} \,h}$$
$$=\frac{c}{P+v_{0}'(x)}>0$$
This shows that $\sigma_{h}$ is bounded and bounded away from zero.
\cqd

\begin{pro}\label{estimL2}
\begin{equation}
\label{aexp2}
|   v_{h}'  - (\hat{v}^{N}_{h})'  |_{L^2}= O(h^{\frac{(N+1)}2}).
\end{equation}
\end{pro}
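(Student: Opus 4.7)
The strategy is a weighted energy estimate using the adjoint density $\sigma_h^N$ from the preceding lemma as a test function.

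First, I would set $w := v_h - \hat v_h^N$ and subtract the PDE $-\tfrac{h}{2} v_h'' + H(v_h',x) = \overline H_h$ from the approximate identity \eqref{aexp}. Because $H(p,x) = \tfrac12 (P+p)^2 + V(x)$ is an exact quadratic in $p$, Taylor expansion of $H(v_h',x)$ around $(\hat v_h^N)'$ is a genuine (finite) identity with $H_p((\hat v_h^N)',x) = g_h^N(x)$ and $H_{pp} \equiv 1$. Combining this with the preceding proposition $|\overline H_h - \hat H_h^N| = O(h^{N+1})$ yields the pointwise identity
\[
-\tfrac{h}{2}\,w''(x) + g_h^N(x)\,w'(x) + \tfrac{1}{2}\,(w'(x))^2 = R_h(x), \qquad \|R_h\|_{L^\infty(\mathbb T)} = O(h^{N+1}).
\]

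Next, I would multiply this by $\sigma_h^N$ and integrate over $\mathbb T$. Two periodic integrations by parts (which move both derivatives off of $w$ and onto $\sigma_h^N$), combined with the adjoint relation for $\sigma_h^N$ from the preceding lemma, force the two linear contributions $\int_{\mathbb T} \big(-\tfrac{h}{2} w'' + g_h^N w'\big)\,\sigma_h^N\,dx$ to cancel, leaving the sign-definite energy identity
\[
\tfrac{1}{2}\int_{\mathbb T} (w'(x))^2\,\sigma_h^N(x)\,dx = \int_{\mathbb T} R_h(x)\,\sigma_h^N(x)\,dx = O(h^{N+1}).
\]
Invoking the uniform lower bound $\sigma_h^N \geq k > 0$ from the preceding lemma, one concludes
\[
\|v_h' - (\hat v_h^N)'\|_{L^2(\mathbb T)}^2 \;=\; \|w'\|_{L^2}^2 \;\leq\; \tfrac{1}{k}\int_{\mathbb T} (w')^2\,\sigma_h^N\,dx \;=\; O(h^{N+1}),
\]
which is exactly \eqref{aexp2}.

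The main obstacle is securing the \emph{exact} cancellation of the two linear terms in the weighted energy identity. This rests on $\sigma_h^N$ being the genuine adjoint invariant density for the linearization $-\tfrac{h}{2}\partial_x^2 + g_h^N \partial_x$, so that both derivatives can be transferred from $w$ onto $\sigma_h^N$ and eliminated simultaneously. A naive splitting that left even an $O(h)\int w'(\sigma_h^N)'\,dx$ residue would, via Cauchy--Schwarz and Young's inequality, only yield the suboptimal bound $\|w'\|_{L^2} = O(h^{\min((N+1)/2,\,1)})$, which is too weak for $N\geq 2$. With the algebraic cancellation in hand, however, the convexity-induced sign-definite quadratic $\tfrac12 (w')^2$ and the uniform positivity of $\sigma_h^N$ close the estimate at the optimal rate, with no need for any control on $w$ itself (only on $w'$).
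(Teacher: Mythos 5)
Your proof is correct and follows essentially the same route as the paper: subtract the exact PDE for $v_h$ from the approximate identity for $\hat v_h^N$, use the (exact) quadratic structure of $H$ to isolate the sign-definite term $\tfrac12(w')^2$, test against the adjoint invariant density $\sigma_h^N$ so that the linear terms $-\tfrac{h}{2}w'' + g_h^N w'$ integrate to zero, and finish with the uniform lower bound $\sigma_h^N\ge k>0$. The only cosmetic difference is that the paper phrases the quadratic term via strict convexity (a lower bound with modulus $\gamma$) rather than the exact Taylor identity, but for this Hamiltonian $H_{pp}\equiv 1$ so the two are the same.
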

\Pf

%We will drop the $N$ when convenient.
Note that from the last
proposition
$$O(h^{N+1}) = $$
$$\int \,\left[\,H( (\hat{v}_{h}^{N})'(x),x) - \frac{h \,(\hat{v}_{h}^{N})''(x)}2  \, - H( v_h'(x) ,x)) + \frac{h \,v_h''(x)}2  \,\right]\, \sigma_h^N(x) dx.$$

From the strict convexity of the Hamiltonian, and because $H(p,x)$ is quadratic in $p$
we have the following inequality
$$H( v_{h}' (x),x) - H((\hat{v}_{h}^N)' (x),x  ) -\frac{h}2\, (   v_{h}'' (x) - (\hat{v}_{h}^N)'' (x)    )\geq  $$
$$ H_p ((\hat{v}_{h}^N)' (x) , x)\,(\,v_{h}' (x)- (\hat{v}_{h}^N)' (x)\, ) \,+\, \frac{\gamma}{2}\,(\,v_{h}' (x)- (\hat{v}_{h}^N)' (x)\, )^2 -$$
$$ \frac{h}2\, (   v_{h}'' (x) - (\hat{v}_{h}^N)'' (x)    ).$$
Let $\sigma_{h}^{N}$ be as in the previous Lemma. We claim that for
any periodic function $w$ we have
\begin{equation} \label{d1} \int(H_p ((\hat{v}_{h}^{N})'(x) ,x) w'(x)-\frac{h
w''(x)}2)\sigma_{h}^{N}(x) = 0. \,\,\,\,\,\, \end{equation}

This follows from integration by part and from the last lemma, with $g_h^N(x)=H_p ((\hat{v}_{h}^{N})'(x) ,x) $, for
all $x$ we have
$$[\, (\,H_p ((\hat{v}_{h}^{N})'(x) ,x) \,\sigma_{h}^{N}(x)\,)'   -h (\sigma_{h}^{N})''(x)\, ]\,w(x) = 0 .$$

It follows from (\ref{d1}) applied to $w = v_{h} - \hat{v}_{h}^N $
that
$$ \int\,(\,v_{h}' (x)- (\hat{v}_{h}^N)' (x)\, )^2\,  \sigma_{h}^{N} (x)
dx= O(h^{N+1}).$$

As $\sigma_h^N$ is bounded and bounded away from zero we get
$$ \int\,|\,v_{h}' (x)- (\hat{v}_{h}^N)' (x)\, |^2\,
dx= O(h^{N+1}), $$ and, so
$$\|\,v_{h}' - (\hat{v}_{h}^N)' \,
\|_{L^2}= O(h^{\frac{N+1}2}). $$
 \cqd
\begin{pro}\label{estimC0}
\begin{equation}
\label{aexp3}
| {v}_h(x)-{v}_h(y)-(\hat{v}^N_h(x)-\hat{v}^N_h(y))|=O(h^{\frac{N+1}2}).
\end{equation}
\end{pro}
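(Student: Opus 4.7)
The plan is to upgrade the $L^2$ bound on derivatives from Proposition \ref{estimL2} to a uniform bound on the differences of values by the fundamental theorem of calculus combined with the Cauchy--Schwarz inequality.

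Set $w_h(x) := v_h(x) - \hat v_h^N(x)$. Both $v_h$ and $\hat v_h^N$ are periodic and smooth enough that we can write
\[
w_h(x) - w_h(y) \,=\, \int_y^x w_h'(s)\,ds,
\]
where we integrate along the straight segment in $\Tt = [-\tfrac12,\tfrac12)$ (of length at most $1$). Applying Cauchy--Schwarz,
\[
|w_h(x) - w_h(y)| \,\le\, |x-y|^{1/2}\, \|w_h'\|_{L^2(\Tt)} \,\le\, \|v_h' - (\hat v_h^N)'\|_{L^2(\Tt)}.
\]
By Proposition \ref{estimL2}, the right-hand side is $O(h^{(N+1)/2})$, which is precisely the claimed estimate.

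There is essentially no obstacle: the step is standard, and the boundedness of the torus in the one-dimensional setting makes the $|x-y|^{1/2}$ factor harmless. The only point worth verifying is that we may indeed take the integration path within one fundamental domain (so that its length is bounded by $1$), which holds since $w_h$ is periodic and we are free to choose either the direct arc or the complementary one on $\Tt$. This concludes the proof and, combined with \eqref{normalization}, yields the estimates \eqref{eq1}--\eqref{eq2} used throughout Section 2.
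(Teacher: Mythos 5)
Your argument is correct, and it is essentially the same as the paper's: the paper simply cites Morrey's inequality with $n=1$, $p=2$, which is precisely the embedding $W^{1,2}(\Tt)\hookrightarrow C^{0,1/2}(\Tt)$ that you prove inline via the fundamental theorem of calculus and Cauchy--Schwarz. You have written out explicitly the one-line proof that the paper delegates to the named inequality; the mathematical content is identical.
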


\Rm The expansion in \eqref{aexp3} holds only in the uniform topology, as $v_h$ is
in general only continuous. Nevertheless the function $(\hat{v}^N_h$ is smooth.

\Pf Define $g_h(x)= {v}_h(x)-\hat{v}^N_h(x)$, this function is
$\Tt$-periodic. Using Morrey's Inequality with $n=1, p=2$, we have
that
$$ |g_h(x)-g_h(y)|\leq C \|g_{h}'\|_{L_2(\mathbb{R})},$$ for all $x,y\in \Tt$. By proposition \ref{estimL2} we get the result.

\cqd

The analysis of $v_h^*$ is similar to the case we just described.
\bigskip

 We would like to thanks the referee for many valuable comments and for a very careful reading of the manuscript.

\end{document}